\newtheorem{thrm}{Theorem}[section]
\newtheorem{prpstn}[thrm]{Proposition}
\theoremstyle{definition}
\newtheorem{assumptions}[thrm]{Assumptions}
\theoremstyle{remark}
\newtheorem{rmrk}[thrm]{Remark}
\numberwithin{equation}{section}
\algrenewcommand\algorithmicrequire{\makebox[32pt][l]{\textrm{input}}}
\algrenewcommand\algorithmicensure{\makebox[32pt][l]{\textrm{output}}}
\algrenewcommand\algorithmicfunction{\textrm{function}}
\algrenewcommand\algorithmicwhile{\textrm{while}}
\algrenewcommand\algorithmicdo{}
\algrenewcommand\algorithmicend{\textrm{end}}
\algrenewcommand\algorithmicforall{\textrm{for all}}
\algrenewcommand\algorithmicfor{\textrm{for}}
\algrenewcommand\algorithmicrepeat{\textrm{repeat}}
\algrenewcommand\algorithmicuntil{\textrm{until}}
\DeclareFontFamily{U}{mathx}{\hyphenchar\font45}
\DeclareFontShape{U}{mathx}{m}{n}{
      <5> <6> <7> <8> <9> <10>
      <10.95> <12> <14.4> <17.28> <20.74> <24.88>
      mathx10
      }{}
\DeclareSymbolFont{mathx}{U}{mathx}{m}{n}
\DeclareMathSymbol{\bigtimes}{1}{mathx}{"91}
\newcommand{\N}{\mathds{N}}
\newcommand{\R}{\mathds{R}}
\newcommand{\Z}{\mathds{Z}}
\DeclareMathOperator{\supp}{supp}
\DeclareMathOperator*{\argmin}{arg\,min}
\DeclareMathOperator{\range}{range}
\DeclareMathOperator{\ops}{ops}
\newcommand{\id}{{\rm id}}
\newcommand{\spl}[1]{{\rm \ell}_{#1}}
\newcommand{\spH}[1]{{\rm H}^{#1}}
\newcommand{\spL}[1]{{\rm L}_{#1}}
\newcommand{\Hcal}{{\mathcal{H}}}
\newcommand{\Rcal}{{\mathcal{R}}}
\newcommand{\Ical}{{\mathcal{I}}}
\newcommand{\Acal}{{\mathcal{A}}}
\newcommand{\Ncal}{{\mathcal{N}}}
\newcommand{\Rank}{\Rcal}
\DeclareMathOperator{\rank}{rank}
\DeclareMathOperator{\apply}{\textsc{apply}}
\DeclareMathOperator{\coarsen}{\textsc{coarsen}}
\DeclareMathOperator{\rhs}{\textsc{rhs}}
\DeclareMathOperator{\recompress}{\textsc{recompress}}
\DeclareMathOperator{\solve}{\textsc{solve}}
\providecommand{\abs}[1]{\lvert#1\rvert}
\providecommand{\bigabs}[1]{\bigl\lvert#1\bigr\rvert}
\providecommand{\biggabs}[1]{\biggl\lvert#1\biggr\rvert}
\providecommand{\norm}[1]{\lVert#1\rVert}
\providecommand{\ceil}[1]{\lceil#1\rceil}
\newcommand{\UU}{\mathbb{U}}
\newcommand{\bu}{{\bf u}}
\newcommand{\bU}{{\bf U}}
\newcommand{\bv}{{\bf v}}
\newcommand{\bw}{{\bf w}}
\newcommand{\cA}{{\mathcal{A}}}
\newcommand{\ga}{\gamma}
\newcommand{\garatio}{{\rho_\ga}}
\newcommand{\As}{{\Acal^s}}
\newcommand{\AH}[1]{{\Acal_\Hcal({#1})}}
\newcommand{\kk}[1]{{\mathsf{#1}}}
\newcommand{\rr}[1]{{\mathsf{#1}}}
\newcommand{\KK}[1]{{\mathsf{K}_{#1}}}
\newcommand{\Psvd}[2]{\operatorname{P}_{\UU({#1}),{#2}}}
\newcommand{\hatPsvd}[1]{\operatorname{\hat P}_{#1}}
\newcommand{\rsvd}{\operatorname{r}}
\newcommand{\hatCctr}[1]{\operatorname{\hat C}_{#1}}
\newcommand{\Restr}[1]{\operatorname{R}_{#1}} 
\newcommand{\constsvd}{\kappa_{\rm P}}
\newcommand{\constcrs}{\kappa_{\rm C}}
\newcommand{\bA}{\mathbf{A}}
\newcommand{\bT}{\mathbf{T}}
\newcommand{\bbf}{\mathbf{f}}
\newcommand{\bg}{\mathbf{g}}
\newcommand{\cL}{{\mathcal{L}}}
\newcommand{\cD}{{\mathcal{D}}}
\newcommand{\cN}{{\mathcal{N}}}
\newcommand{\hatbS}{\mathbf{\hat S}}
\newcommand{\bS}{\mathbf{S}}
\newcommand{\tbT}{{\tilde{\mathbf{T}}}}
\newcommand{\hdimtree}[1]{\mathcal{D}_{#1}}
\newcommand{\hroot}[1]{{0_{#1}}}
\newcommand{\leaf}[1]{\cL(\hdimtree{#1})}
\newcommand{\nonleaf}[1]{\cN(\hdimtree{#1})}
\newcommand{\leftchild}{{{\rm c}_1}}
\newcommand{\rightchild}{{{\rm c}_2}}
\newcommand{\child}[1]{{{\rm c}_{#1}}}
\newcommand{\hsum}[1]{\mathrm{\Sigma}_{#1}}
\newcommand{\dd}{{\rm rank}}
\newcommand{\om}[1]{{\omega_{#1}}}
\newcommand{\omi}[2]{{\hat\omega_{#1,#2}}}
\newcommand{\Sc}{{\bS}}
\newcommand{\Sci}[1]{{\hat\bS_{#1}}}
\newcommand{\Pc}{{\mathbf{P}}}
\newcommand{\bC}{{\mathbf{C}}}
\newcommand{\omin}{\omega_\mathrm{min}}
\DeclareMathOperator{\diag}{diag}
\def\blfootnote{\xdef\@thefnmark{}\@footnotetext}
\title{Adaptive Low-Rank Methods for Problems on Sobolev Spaces with Error Control in $\spL{2}$}
\author{M.\ Bachmayr and W.\ Dahmen}
\begin{document}

\maketitle

\blfootnote{This work has been supported in part by the DFG SFB-Transregio 40, the Excellence Initiative of the German Federal and State Governments
 (RWTH Aachen  Distinguished Professorship), NSF grant DMS 1222390, and the ERC advanced grant BREAD.} 

\begin{abstract}
Low-rank tensor methods for the approximate solution of second-order elliptic partial differential equations in high dimensions have recently attracted significant attention. A critical issue is to rigorously bound the error of such approximations, not with respect to a fixed finite dimensional discrete background problem, but with respect to the exact solution of the continuous problem. While the energy norm offers a natural error measure corresponding to the underlying operator considered as an isomorphism from the energy space onto its dual, this norm requires a careful treatment in its interplay with the tensor structure of the problem. In this paper we build on our previous work on energy norm-convergent
subspace-based tensor schemes contriving, however, a modified formulation  which now enforces convergence only in $\spL{2}$.
In order to still be able to exploit the mapping properties of elliptic operators, a crucial ingredient of our approach is 
the development and analysis of a suitable asymmetric preconditioning scheme. We provide estimates for the computational complexity of the resulting method in terms of the solution error and study the practical performance of the scheme in numerical experiments. In both regards, we find that controlling solution errors in this weaker norm leads to substantial simplifications and to a reduction of the actual numerical work required for a certain error tolerance.

\textbf{Keywords:} Low-rank tensor approximation, adaptive methods, high-dimensional elliptic problems, preconditioning, computational complexity

\textbf{Mathematics Subject Classification (2000):} 41A46, 41A63, 65D99, 65J10, 65N12, 65N15
\end{abstract}
 
\section{Introduction}\label{sec:intro}
For a given open product domain $\Omega = \Omega_1 \times \cdots \times \Omega_d \subset \R^d$,  we are interested in
approximately  solving problems of the form
\begin{equation}
\label{1.1}
- {\rm div}( M \nabla u)= f \;\text{in $\Omega$},\quad u|_{\partial\Omega} =0,
\end{equation}
where $M$ is a symmetric uniformly positive definite $(d\times d)$-matrix over $\Omega$. Here, we are interested in
the {\em spatially high-dimensional} regime $d\gg 1$. We always assume that $M$ is sparse and thus causes only a weak coupling of the variables $x_i\in \Omega_i$.
As guiding examples one may think of  $M= {\rm diag}(M_1,\ldots,M_d)$, where
the $M_i$ are sufficiently benign functions of $x_i$ only, or of a constant tridiagonal matrix $M$. 

For simplicity of exposition, we deliberately keep \eqref{1.1} on the level of a specific model problem.
However, what follows applies in essence also to natural variants of \eqref{1.1}, for instance, when the Dirichlet boundary conditions are replaced
(partially or throughout) by Neumann conditions, as long as the type of boundary condition remains the same on
each $(d-1)$-face of $\Omega$. While above the $\Omega_i$ are intervals, one could also consider a product of $d$ more general low-dimensional domains. %

For product domains and weakly coupling diffusion matrices, the differential 
operator in \eqref{1.1} has formally low rank, that is, its action only leads to a moderate increase in the ranks of suitable tensor representations. This justifies the hope that, for instance, for separable right hand sides
the solution may be approximable efficiently by low-rank tensor expansions, where the low-dimensional factors in the rank-one summands are not predefined basis functions but are allowed to depend on the solution. This hope is indeed supported by
substantial numerical evidence and, at least for diagonal $M$, also on a theoretical level \cite{DDGS}.

The numerical treatment of such solution-dependent basis functions still necessitates their expansion in terms of suitable low-dimensional reference basis functions. In this combination of low-rank representations and basis expansions of corresponding tensor components, we are thus in fact dealing with two levels of approximation. Between these, a proper balance needs to be maintained in the convergence to the exact solution, since both allowing large tensor ranks for coarse discretizations and using very fine discretizations with inaccurate low-rank approximations will, especially at high accuracies, lead to excessive numerical costs. 
Common strategies for low-rank approximations start from a fixed discretizations and use tensor representations as a linear algebra tool, see e.g.\ \cite{BNZ:14,Kressner:11,Ballani:13}.
Such a fixed discretization corresponds to a fixed finite reference basis for representing the tensor components. In this setting, however, one cannot address the necessary intertwining of subspace approximation and adaptive refinement of tensor factor representations. We thus need to deal with several closely connected issues: obtaining a posteriori error information that can guide the adaptive refinement of the reference basis, ensuring that the underlying representation of the differential operator does not become ill-conditioned as the basis is refined, and avoiding inappropriately large tensor ranks.

These considerations have motivated the approach put forward in \cite{B,BD} on a general level and in \cite{BD2}
with special focus on problems of the form \eqref{1.1}. A central idea there is that rigorous a posteriori error bounds driving convergent approximation schemes should rely on a faithful approximation to the {\em residual of the continuous problem}
which, in turn, should reflect the accuracy of the approximate solution. This seems to be possible only when exploiting the mapping properties
of the operator $A$ induced by the classical weak formulation
\begin{equation}
\label{1.2}
\langle A u,v\rangle := \int_\Omega M\nabla u\cdot \nabla v \,dx = \langle f,v\rangle,\quad v\in \spH{1}_0(\Omega),
\end{equation}
over the space $\spH{1}_0(\Omega)$.
In fact, denoting by $\underline{c}, \bar C$ the smallest and largest eigenvalue of $M$
one has 
\begin{equation}
\label{mappinH1}
\|A\|_{\spH{1}_0(\Omega)\to \spH{-1}(\Omega)}\leq \bar C,\quad \|A^{-1}\|_{H^{-1}(\Omega)\to \spH{1}_0(\Omega)}\le \underline{c}^{-1}\,,
\end{equation}
which is equivalent to saying that errors  in the $\spH{1}$-norm can be faithfully estimated by residuals in the dual norm $\|\cdot\|_{\spH{-1}(\Omega)}$.

It is unfortunately not entirely straightforward to exploit these facts for a rigorous error control of low-rank tensor approximations. Subspace-based tensor formats, whose stability properties play an important role in devising reliable computational routines, are not immediately amenable to spaces that are not endowed with cross-norms, see \cite{BD2} for a detailed discussion.
Therefore, the strategy in \cite{B,BD,BD2} is based on transforming the problem first to an equivalent one where 
the transformed operator $\bA$ is an isomorphism mapping an $\spl{2}$-space over an infinite product index set, which {\em is} a space endowed with a cross-norm, {\em onto itself}. This transformation requires a Riesz basis for the
energy space $\spH{1}_0(\Omega)$. A suitable basis of this type can be obtained by rescaling an orthonormal tensor product  wavelet basis of $\spL{2}(\Omega)$. Unfortunately, and this is the price to be paid, the rescaling destroys separability of the basis functions
and, as a consequence, causes the resulting operator representation $\bA$ to have {\em infinite rank}. Aside from the role of suitable recompression and coarsening
operators given in \cite{BD}, a key ingredient in still constructing 
low-rank approximations with controlled energy norm accuracy for elliptic problems are {\em adaptive finite-rank rescaling operators}
proposed and analyzed in \cite{BD2}. They are based on new specially tailored relative error bounds for
exponential sum approximations to the function $g(t)=t^{-1/2}$. This ultimately led to an adaptive refinement scheme
generating approximate solutions represented in hierarchical tensor formats, convergent in energy norm with 
near-optimal complexity, for each fixed spatial dimension $d$, with respect to ranks and representation sparsity of the tensor factors \cite{BD2}.

Nevertheless, the fact that the energy norm is not a cross-norm and the resulting unbounded tensor ranks of the representation $\bA$ significantly impede the control of rank growth in the iterates. The central question addressed in the present work is therefore:

\begin{quote}Can one devise a solver that provides approximate solutions in hierarchical tensor format at a significantly lower numerical cost by enforcing convergence only in a norm that is \emph{weaker} than the energy norm, namely $\|\cdot\|_{\spL{2}(\Omega)}$?
\end{quote}

Of course, there is no hope of avoiding the above mentioned \emph{``scaling problem''} completely. In one way or another, a rigorous convergence analysis has
to make use of the mapping properties of the underlying operator, which always refers to a pair of spaces  of which is at least one is {\em not} endowed 
with a cross-norm. However, if one has {\em  full elliptic regularity}, the underlying operator is also an isomorphism from
$\spH{2}(\Omega)\cap \spH{1}_0(\Omega)\to \spL{2}(\Omega)$ and, by duality, also from $\spL{2}(\Omega)$ onto 
$(\spH{2}(\Omega)\cap \spH{1}_0(\Omega))'$. Since $\Omega$ as a Cartesian product of open intervals (or more generally of {\em convex} low-dimensional domains) is convex this is indeed the case. 

Adhering to the basic idea in \cite{BD,BD2} of transforming the variational problem first into an equivalent problem over
the space $\ell_2(\nabla^d)$, with $\nabla$ a countable index set, of order-$d$ tensors---so as to be able to employ {\em subspace-based tensor formats}---we now need to contrive an {\em asymmetric} preconditioner to arrive at an ideal convergent iteration for the infinite-dimensional problem on $\ell_2(\nabla^d)$.
 This central issue is addressed in Section \ref{sec:asymprecond}. Section \ref{sec:analysis} is devoted to the precise formulation of the new algorithm and its
 convergence and complexity analysis. Finally, in Section \ref{sec:numre}, the theoretical findings are illustrated and quantified by numerical experiments.
 
 We close this section with recalling some primarily technical preliminaries from \cite{BD,BD2}, where a more self-contained exposition can be found.

\subsection{Prerequisites }\label{ssec:1.2}
\subsubsection{Tensor Representations}
For simplicity of exposition, in what follows we focus as in \cite{BD2} on problems of the form \eqref{1.1} with a constant diffusion matrix $M$ and $\Omega =(0,1)^d$ so that the operator 
\begin{equation}\label{eq:example_operator}
 A  u := -\sum_{i,j=1}^d m_{ij} \partial_i\partial_j u \,,
\end{equation}
with constant coefficients $m_{ij}$ and symmetric positive  definite $M=(m_{ij})\in \R^{d\times d}$ satisfies \eqref{mappinH1}.  Furthermore, to avoid certain technicalities, we impose the slightly stronger assumption that $M$ is \emph{diagonally dominant}. 

In order to transform \eqref{1.1} into an equivalent problem over sequence spaces, we  employ
a tensor product {\em wavelet basis} 
$$
  \bigl\{\Psi_\nu := \psi_{\nu_1} \otimes\cdots\otimes\psi_{\nu_d}\colon \nu =(\nu_1,\ldots,\nu_d) \in\nabla^d \bigr\}\,,
$$
where $\{ \psi_\nu \}_{\nu\in\nabla}$ is an orthonormal basis of $\spL{2}(0,1)$ and $\{ 2^{-2\abs{\nu}} \psi_\nu \}_{\nu\in\nabla}$ is a Riesz basis of 
$\spH{2}(0,1)\cap \spH{1}_0(0,1)$. Note that this requires, in particular, that the wavelets vanish on $\partial\Omega$, that is, the univariate factor
wavelets satisfy $\psi_\nu(0)=\psi_\nu(1)=0$, $\nu\in\nabla$.
The corresponding wavelet representation of $A$ is then given by the infinite matrix
\begin{equation}
\label{eq:Tdef}
\bT := \bigl(  \langle \Psi_\mu, A \Psi_\nu \rangle \bigr)_{\mu,\nu\in\nabla^d} .
\end{equation}
Since the homogeneous boundary conditions are built into the basis $\{ \Psi_\nu \}$, finding the solution $u$ of \eqref{1.1}
is equivalent to finding its wavelet coefficient sequence 
\begin{equation}
\label{ucoeff}
\bu:= (u_\nu)_{\nu\in\nabla^d}, \quad u_\nu := \langle u,\Psi_\nu\rangle := \int_\Omega u\, \Psi_\nu\, dx.
\end{equation}
Defining $\bg = (\langle f,\Psi_\nu\rangle )_{\nu\in\nabla^d}$, the sequence $\bu$, in turn, is the solution of 
\begin{equation}
\label{Tsystem}
\bT \bu = \bg.
\end{equation}
Thus our objective is to solve \eqref{Tsystem}. Note that the operator $\bT$ is unbounded as an operator from $\ell_2(\nabla^d)$ to itself,
where as usual $\ell_2(\nabla^d)$ is the space of square summable sequences over the index set $\nabla^d$ endowed with the norm
$$
\|\bv\|:= \|\bv\|_{\ell_2(\nabla^d)}:= \Big(\sum_{\nu\in\ell_2(\nabla^d)}|v_\nu|^2\Big)^{1/2}. 
$$

 For the moment we postpone the discussion of the choice of subspace of $\ell_2(\nabla^d)$ for which \eqref{Tsystem} is  supposed to hold
and explain first some algebraic features of \eqref{Tsystem}. Since $\nabla^d$ is a product set, we view any element $\bv\in \ell_2(\nabla^d)$
as {\em a tensor of order} $d$. As an operator acting on such tensors, $\bT$ has finite rank. More precisely, as has been pointed
out in \cite{BD2}, $\bT$ has
 the tensor representation
\begin{equation} 
\label{eq:unrescaled_tuckersum}
 \bT = \sum_{1\leq n_1,\ldots,n_d\leq R} c_{n_1,\ldots,n_d} \bigotimes_i \bT^{(i)}_{n_i} \,,
\end{equation}
with $R=4$, and
\begin{align} 
\label{T1}
  \bT^{(i)}_1 := \bT_1 &= \bigl( \langle \psi_\nu, \psi_\mu \rangle \bigr)_{\mu,\nu\in\nabla} = \id\,, &  
 \bT^{(i)}_2 := \bT_2 &:= \bigl( \langle \psi'_\nu, \psi'_\mu \rangle \bigr)_{\mu,\nu\in\nabla}  \,,\\
 \label{T2}
 \bT^{(i)}_3 := \bT_3 &:= \bigl( \langle \psi_\nu, \psi'_\mu \rangle \bigr)_{\mu,\nu\in\nabla}\,, &
 \bT^{(i)}_4 := \bT_4 &:= \bigl( \langle \psi'_\nu, \psi_\mu \rangle \bigr)_{\mu,\nu\in\nabla}\,.
\end{align}
Here the nonzero entries $c_{n_1,\ldots,n_d}$ of the sparse coefficient tensor $\mathbf{c}$ are given by
$c_{2,1,\ldots,1} = m_{11}$, $c_{1, 2,1,\ldots,1} = m_{22}$, \ldots, $c_{3,4,1,\ldots,1} = c_{4,3,1,\ldots,1} = m_{12}$, \ldots, $c_{3,1,4,1,\ldots,1} = c_{4,1,3,1,\ldots,1} = m_{13}$, and so forth. Thus, for $A$ as in \eqref{eq:example_operator}, in general we have that $R=4$.
Note further that, due to the 
 homogeneous Dirichlet boundary conditions, integration by parts shows $\bT_3 = -\bT_4$, which gives
\begin{equation}
\label{simplifying}
 \bT_3 \otimes \bT_4 = \bT_4 \otimes \bT_3 = - \bT_3\otimes \bT_3 ,
\end{equation}
and thus a reduction to $R=3$. 

We shall now introduce some basic notions of tensor representations. For further details and references, we refer to \cite{Hackbusch:12}.
The particular representation format for the operator $\bT$ chosen in \eqref{eq:unrescaled_tuckersum} corresponds to the so-called {\em Tucker format} for tensors of order $d$.
Accordingly, as mentioned earlier, regarding $\bu$ as a tensor of order $d$ on $\nabla^d =
\bigtimes_{i=1}^d \nabla$, it can be represented in terms of the Tucker format
\begin{equation}\label{eq:tuckerd}
  \mathbf{u} =  \sum_{k_1=1}^{r_1} \cdots \sum_{k_d=1}^{r_d}
    a_{k_1,\ldots,k_d} \,\mathbf{U}^{(1)}_{k_1} \otimes \cdots \otimes
    \mathbf{U}^{(d)}_{k_d} \, \,,
\end{equation}
where $\mathbf{a}= (a_{k_1,\ldots,k_d})_{1\leq k_i\leq r_i:i=1,\ldots,d}$ is called
the \emph{core tensor} and each  matrix $\mathbf{U}^{(i)}= \big(\bU^{(i)}_{\nu_i,  k_i }\big)_{\nu_i\in \nabla^{d_i},1\leq k_i\leq r_i}$ with
orthonormal column vectors $\mathbf{U}^{(i)}_k\in\spl{2}(\nabla^{d_i})$, $k = 1,\ldots,r_i$,
is called the $i$-th \emph{orthonormal mode frame} (here we admit $r_i=\infty, i=1,\ldots,d$).
We refer to \cite{BD,BD2} for the precise definitions and notation, to which we will   adhere in this paper as well.   Of course, for an operator $\bT$ on $\ell_2(\nabla^d)$ and an element $\bu\in \ell_2(\nabla^d)$ that are both given as
representations in the Tucker format, the image $\bT\bu$ can readily be expressed in the Tucker format by a combination of the core tensors and the application of the 
$\bT^{(i)}_k$ to the mode frames $\bU^{(i)}_{k_i}$, see \cite{BD} for details.

Since the core tensor $\mathbf{a}$ in  \eqref{eq:tuckerd} still depends on $d$ indices, for large $d$ it will generally have far too many entries for a direct representation.
For this reason, we focus in what follows on the 
{\em hierarchical Tucker format} \cite{Hackbusch:09-1}, which is obtained by further decomposing $\mathbf{a}$ into successive compositions of
third-order tensors as
$$   
\mathbf{a}= \Bigl( \hsum{\hdimtree{d}}\bigl(\{
\mathbf{B}^{(\alpha,k)}\}\bigr) \Bigr)_{(k_\beta)_{\beta\in\mathcal{L}(\hdimtree{d})}}
\\
:= \sum_{(k_\gamma)_{\gamma\in 
    \Ical(\hdimtree{d})} }
       \prod_{\delta\in \nonleaf{d}}
      B^{(\delta,k_\delta)}_{(k_{\leftchild(\delta)},k_{\rightchild(\delta)})}
      \,.        
$$
This is based on a fixed {\em binary dimension tree} $\cD_d$ obtained by successive bisections of the set of coordinate 
indices $\hroot{d} := \{1,\ldots,d\}$,
which forms the root node. 
Moreover,  %
singletons $\{ i\} \in \hdimtree{d}$ are referred to as \emph{leaves},  and elements
of $\hdimtree{d}\setminus\bigl\{\hroot{d},\{1\},\ldots,\{d\} \bigr\}$ as \emph{interior nodes}.
The set of leaves is denoted by $\leaf{d}$, where we
additionally set $\nonleaf{d} :=
\hdimtree{d}\setminus\mathcal{L}(\mathcal{D}_d)$. 
The functions
$$
\child{i}: \cD_d\setminus \cL(\cD_d)\to \cD_d\setminus \{\hroot{d}\}, 
\qquad i=1,2\,,
$$
produce the ``left'' and ``right'' children $\child{i}(\alpha)\subset \alpha$ of a non-leaf node $\alpha \in \mathcal{N}(\cD_d)$.

With each node $\alpha\in \cD_d$ we associate the \emph{matricization}
$T^{(\alpha)}_\bu$ of $\bu$, obtained by rearranging the entries of the tensor into an infinite matrix representation of a Hilbert-Schmidt operator using the indices in $\nabla^\alpha$ as row indices. The dimensions of the ranges of these
operators yield the \emph{hierarchical ranks}
$\dd_\alpha(\bu) := \dim \range T^{(\alpha)}_\bu$ for $\alpha \in \cD_d$. Except for $\alpha=\hroot{d}$, where we always have $\dd_{\hroot{d}}(\bu)=1$, these are collected in the \emph{hierarchical rank vector} $
\rank (\bu) =  {\rank_{\hdimtree{d}}(\bu)} := (\rank_\alpha (\bu) )_{\alpha\in\hdimtree{d}\setminus\{\hroot{d}\}} 
$
and give rise to the hierarchical tensor classes
\begin{equation*}
  \Hcal(\rr{r}) := \bigl\{ \mathbf{u} \in
  \spl{2}(\nabla^d) \colon  \rank_\alpha(\mathbf{u}) \leq {r}_\alpha \text{
  for all $\alpha \in \hdimtree{d}\setminus\{\hroot{d}\}$} \bigr\} \,.
\end{equation*}
For singletons $\{i\}\in \hdimtree{d}$, we briefly write $\rank_i (\bu) := \rank_{\{i\}}(\bu)$. We denote by $\mathcal{R} \subset (\N_0 \cup\{\infty\})^{\hdimtree{d}\setminus\{\hroot{d}\}}$ the set of hierarchical rank vectors for which $ \Hcal(\rr{r})$ is nonempty.

Again, there is an analogous hierarchical format for operators, i.e., the core tensor $\mathbf{c}$ in \eqref{eq:unrescaled_tuckersum}
is further decomposed as a product of tensors of order three, and the format is consistent when applying an operator to a tensor, see \cite{BD}. The hierarchical ranks in the representation of $\mathbf{c}$ will be denoted by $R_\alpha$, $\alpha \in \hdimtree{d}$.
In what follows we are mainly interested in two scenarios, namely that $M= {\rm diag}\,(m_{ii})_{i=1}^d$
or that $M$ is tridiagonal. In the former case, we have $R=2$ as well as $R_\alpha \leq 2$. For tridiagonal $M$, in general one obtains $R=4$ and $R_\alpha \leq 5$, but in the present case, due to \eqref{simplifying}, this reduces to $R=3$ and $R_\alpha\leq 4$. We refer to \cite[Example 3.2]{BD2}
for more details.
\subsubsection{Recompression and Coarsening}\label{sssec:1.2.2}

The basic strategy suggested in \cite{BD,BD2}, which we follow here as well, is to solve \eqref{Tsystem} iteratively.
At a first glance this looks promising since the application of the finite-rank operator $\bT$ to a finite-rank iterate 
produces (at least for a suitably truncated finite-rank right hand side) a new iterate of finite rank. 
As mentioned earlier, at least two principal obstructions arise. 
First, the action of the operator as well as the summation of finite rank tensors increase the tensor ranks in each step, so that a straightforward
iteration would give rise to exponentially increasing ranks. Second, increasing the ranks of tensor expansions has to go hand in hand with growing
the supports of increasingly more accurately resolved mode frames. In this section we briefly recall from \cite{BD,BD2} how to deal with these issues. The key point is to devise suitable tensor recompression and coarsening schemes
that automatically find near-best approximations from the classes $\Hcal(\rr{r})$ whose mode frames have    near-minimal supports. Again we refer to \cite{BD} 
for a detailed derivation and recall here the main results for later use.
The {\em hierarchical singular value decomposition} ($\Hcal$SVD) (cf.\ \cite{Grasedyck:10})
allows one to identify for given tensor $\bv$ a system of mode frames, denoted by
$\UU(\bv)$, whose rank truncation yields near-optimal approximations. We denote by $\Psvd{\mathbf{v}}{\rr{r}} \mathbf{v}$ the result of truncating a $\Hcal$SVD of $\bv$ to ranks $\rr{r}$.
Using computable upper bounds $\lambda_{\rr{r}}(\bv)$ for $\|\bv - \Psvd{\mathbf{v}}{\rr{r}}\bv\|$, one can
determine near-minimal ranks
$
\rsvd(\bu,\eta) \in \argmin\bigl\{|\rr{r}|_\infty: \rr{r}\in \Rank,\;  {  \lambda_{\rsvd(\mathbf{u}, \eta)}(\bu) } \leq \eta\bigr\}
$
that ensure the validity of  a given accuracy tolerance $\eta>0$, which we use to define the recompression operator
$\hatPsvd{\eta} \bv:= \Psvd{\bv}{\rsvd(\bv,\eta)}\bv$.

The definition of a {\em coarsening} operator producing near-minimal supports of mode frames in a sense to be made precise
later, is a little more involved and based on the notion of {\em tensor contractions} which, for $i=1,\ldots,d$, are given by
$$
 \pi^{(i)}(\mathbf{v}) = \bigl( \pi^{(i)}_{\nu_i} (\mathbf{v})
 \bigr)_{\nu_i\in\nabla}  
 :=\biggl(  \Bigl(\sum_{%
 \nu_1,\ldots,\nu_{i-1},\nu_{i+1},\ldots,\nu_d} \abs{v_{ \nu_1,\ldots,\nu_{i-1},\nu_i, \nu_{i+1},\ldots,\nu_d}}^2 \Bigr)^{\frac{1}{2}}  \biggr)_{\nu_i \in\nabla} \in \ell_2(\nabla).
$$
A naive evaluation of these quantities requires a $(d-1)$-dimensional summation, which would be inacceptable. However,
the identity 
$$
 \pi^{(i)}_\nu (\mathbf{v}) = \Bigl( \sum_{k}
 \bigabs{\mathbf{U}^{(i)}_{\nu, k}}^2 \bigabs{\sigma^{(i)}_{k}}^2
 \Bigr)^{\frac{1}{2}} ,
$$
where $\sigma^{(i)}_k$ are the mode-$i$ singular values and $\{ \bU^{(i)}_k\}$ the corresponding mode frames from $\UU(\bv)$, facilitates an evaluation at a cost proportional to $\rank_i(\bv)$ for each $\nu \in \spl{2}(\nabla)$, see
\cite{BD}. The quantities
$$
 \supp_i(\bv) := \supp\big(\pi^{(i)}(\bv)\big) 
$$
allow one to quantify the the actual number of nonzero entries of mode frames, and we have $\supp \bv \subseteq \bigtimes_{i=1}^d \supp_i(\bv)$.
With the aid of a total ordering of the entries of all $\pi^{(i)}(\bv)$, $i=1,\ldots,d,$ one can find for a given $\bv$ a product set 
$\Lambda(\bv;N)$, with sum of coordinatewise cardinalities at most $N$, such that the restriction $\Restr{\Lambda(\mathbf{v};N)}$ of $\bv$ to $\Lambda(\bv;N)$
(meaning that the entries $v_\nu$ are set to zero for $\nu\notin \Lambda(\bv;N)$) satisfies
$$
 \norm{\mathbf{v} - \Restr{\Lambda(\mathbf{v};N)}\bv} \leq \mu_N(\bv) \leq 
 \sqrt{d}\, \inf \bigl\{  \norm{\mathbf{v} -
 \Restr{\hat\Lambda}\mathbf{v}} \colon {\hat\Lambda = \hat\Lambda_1\times\cdots\times \hat\Lambda_d, \, \textstyle\sum_i\#(\hat\Lambda_i)\le N}  \bigr\} \,,
 $$
where the error estimate $\mu_N(\bv)$ can be computed directly from the sequences $\pi^{(i)}(\bv)$. Setting %
 $N(\bv,\eta) := \min\bigl\{ N\colon \mu_N(\bv) \leq \eta
  \bigr\}$, %
we define   the {\em thresholding} procedure
\begin{equation}
\label{eq:tensorcoarsen}
\hatCctr{\eta} (\mathbf{v}) :=  \Restr{\Lambda(\mathbf{v};N(\bv;\eta))} \mathbf{v}\,.
\end{equation}

To assess the performance of the recompression and coarsening operators $\hatPsvd{\eta}, \hatCctr{\eta}$, as in
\cite{BD} we define
\[
\sigma_{r,\Hcal}(\bv):= \inf\,\bigl\{\norm{ \bv- \bw} \,:\; \bw \in
\Hcal(\rr{r}) \text{ with $\rr{r}\in\Rank$, $\abs{\rr{r}}_\infty \leq r$} \}\,,
\]
and, for a given {\em growth sequence} 
$\ga = \bigl(\ga(n)\bigr)_{n\in \N_0}$ with $\ga(0)=1$
and $\ga(n)\to\infty$ as $n\to\infty$, we consider
$$
\Acal(\ga)= \AH{\ga}:=  { \bigl\{\bv\in {{\spl{2}(\nabla^d)}} : \sup_{r\in\N_0} %
\ga({r})\,
\sigma_{r,\Hcal}(\bv)=:\abs{\bv}_{\AH{\ga}} { <\infty }\bigr\} },
$$ 
where we set
$\norm{\bv}_{\AH{\ga}}:= \norm{\bv} + \abs{\bv}_{\AH{\ga}}$. We always require that
$
\garatio:= \sup_{n\in\N} \ga(n)/\ga(n-1)<\infty \,,
$
which covers at most exponential growth. 
Thus, hierarchical ranks of size at most $\gamma^{-1}(\abs{\bv}_{\AH{\ga}}/\eta)$ suffice to 
approximate $\bv \in \Acal(\ga)$ within accuracy $\eta$.

Similarly, defining the error of best $N$-term approximation
$$
  \sigma_N(\mathbf{v}) := \inf_{\substack{\Lambda\subset\nabla^{\hat d}\\
  \#\Lambda\leq N}} \norm{\mathbf{v} - \Restr{\Lambda} \mathbf{v}} ,
$$
we consider for $s>0$ the classical {\em approximation classes} $\As = \As(\nabla^{\hat d})$, $\hat d\in\N$, comprised of all $\bv\in \ell_2(\nabla^{\hat d})$
for which the quasi-norm
\begin{equation*}
  \norm{\mathbf{v}}_{\As(\nabla^{\hat d})} := \sup_{N\in\N_0} (N+1)^s
  \sigma_N(\mathbf{v})  
\end{equation*}
is finite.  Hence, using this concept for $\hat d=1$, when the mode frames belong to $\As(\nabla)$,
they can be approximated within accuracy $\eta$ by finitely supported vectors of size ${\cal O}(\eta^{-1/s})$.

 The relevant facts describing the performance of $\hatPsvd{\eta}$ and $\hatCctr{\eta}$ can be summarized as follows \cite{BD}.
 
 \begin{thrm}
\label{lmm:combined_coarsening}
Let $\mathbf{u}, \mathbf{v} \in \spl{2}(\nabla^d)$ with
$\mathbf{u}\in\AH{\ga}$, $\pi^{(i)}(\mathbf{u}) \in \cA^s$ for
$i=1,\ldots,d$, and $\norm{\mathbf{u}-\mathbf{v}} \leq \eta$. Let 
$\constsvd = \sqrt{2d-3}$ and $\constcrs = \sqrt{d}$. Then, for any fixed $\alpha >0$,
\begin{equation*}
\mathbf{w}_{\eta} := \hatCctr{\constcrs
  (\constsvd+1)(1+\alpha)\eta} \bigl(\hatPsvd{\constsvd
(1+\alpha)\eta} (\mathbf{v}) \bigr) 
\end{equation*}
satisfies
\begin{equation}
\label{eq:combinedcoarsen_errest}
  \norm{\bu - \bw_\eta} \leq C(\alpha,\constsvd,\constcrs)\, \eta \,,
\end{equation}
where $ C(\alpha,\constsvd,\constcrs) :=  \bigl( 1 + \constsvd(1+\alpha) + \constcrs (\constsvd+1)(1+\alpha)\bigr)$, as well as
\begin{equation}\label{eq:combinedcoarsen_rankest}
 \abs{\rank(\bw_\eta)}_\infty \leq \ga^{-1}\bigl(\garatio
 \norm{\bu}_{\AH{\ga}}/(\alpha \eta)\bigr)\,,\qquad \norm{\bw_\eta}_{\AH{\ga}}
 \leq C_1 \norm{\bu}_{\AH{\ga}} ,
\end{equation}
with $C_1 = (\alpha^{-1}(1+\constsvd(1+\alpha)) + 1)$ and
\begin{equation}\label{eq:combinedcoarsen_suppest}
\begin{aligned}
 \sum_{i=1}^d \#\supp_i (\mathbf{w}_\eta) &\leq {2 \eta^{-\frac{1}{s}} d\, \alpha^{-\frac{1}{s}} } \Bigl(
 \sum_{i=1}^d \norm{ \pi^{(i)}(\bu)}_{\As} \Bigr)^{\frac{1}{s}}  \,, \\
   \sum_{i=1}^d \norm{\pi^{(i)}(\bw_\eta)}_{\As} &\leq C_2 \sum_{i=1}^d
   \norm{\pi^{(i)}(\bu)}_{\As} ,
 \end{aligned}
\end{equation}
with $C_2 = 2^s(1+ {3^s}) + 2^{{4s}} { \alpha^{-1} \bigl( 1+  \constsvd(1+\alpha) + \constcrs  (\constsvd + 1)(1+\alpha) \bigr)  }  d^{\max\{1,s\}}$.
\end{thrm}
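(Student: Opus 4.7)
The plan is to treat the two operators $\hatPsvd{\cdot}$ and $\hatCctr{\cdot}$ in succession, tracking error budgets, ranks and mode-frame supports separately, and then to assemble the estimates via the triangle inequality. Throughout, the starting point is the hypothesis $\|\bu-\bv\|\le\eta$, so $\bv$ may be viewed as a perturbation of an element of $\AH{\ga}$ with controlled $\As$-behavior of its contractions.

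First I would analyze $\tilde\bv := \hatPsvd{\constsvd(1+\alpha)\eta}(\bv)$. Because $\bu\in \AH{\ga}$, for the rank $\bar r:= \ga^{-1}(\garatio\|\bu\|_{\AH{\ga}}/(\alpha\eta))$ there is a $\bw\in \Hcal(\rr{r})$ with $|\rr{r}|_\infty\le \bar r$ satisfying $\|\bu-\bw\|\le \alpha\eta$, hence $\|\bv-\bw\|\le (1+\alpha)\eta$. Combined with the quasi-optimality property of the $\Hcal$SVD truncation, which underlies the construction of the computable bound $\lambda_{\rr{r}}$ with constant $\constsvd=\sqrt{2d-3}$, this forces $\lambda_{\rr{r}}(\bv)\le \constsvd(1+\alpha)\eta$. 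By the definition of $\rsvd(\bv,\cdot)$, this yields $|\rank(\tilde\bv)|_\infty\le \bar r$, hence the rank bound in \eqref{eq:combinedcoarsen_rankest}, and simultaneously $\|\bv-\tilde\bv\|\le \constsvd(1+\alpha)\eta$. A triangle-inequality argument then also bounds $\|\tilde\bv\|_{\AH{\ga}}$ via $\norm{\bu}_{\AH{\ga}}$ with the constant $C_1$, using that rank truncation keeps us in the chosen tensor class.

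Next I would apply the coarsening to $\tilde\bv$. Since $\|\bu-\tilde\bv\|\le (1+\constsvd(1+\alpha))\eta$ and $\pi^{(i)}(\bu)\in\As$, one gets an analogous transferred bound on the error of best product-support approximation to $\tilde\bv$: there exists a product set $\hat\Lambda=\hat\Lambda_1\times\cdots\times\hat\Lambda_d$ with $\sum_i\#\hat\Lambda_i\le N$ for some $N$ controlled by $\eta^{-1/s}$ and $\sum_i\|\pi^{(i)}(\bu)\|_{\As}$ such that $\|\tilde\bv-\Restr{\hat\Lambda}\tilde\bv\|\le \alpha\eta$. The quasi-optimality of the coarsening routine relative to $\mu_N$, with constant $\constcrs=\sqrt{d}$, then produces the threshold $\constcrs(\constsvd+1)(1+\alpha)\eta$ and gives both the support bound in \eqref{eq:combinedcoarsen_suppest} and the approximation bound $\|\tilde\bv-\bw_\eta\|\le \constcrs(\constsvd+1)(1+\alpha)\eta$. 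Adding up all three error contributions yields the overall constant $C(\alpha,\constsvd,\constcrs)$ claimed in \eqref{eq:combinedcoarsen_errest}.

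Finally I would verify that the $\As$-norms of the contractions of $\bw_\eta$ remain controlled, which is the step I expect to require the most care: coarsening acts coordinate-wise on the $\pi^{(i)}$, but because it is performed \emph{after} the rank truncation, one needs to propagate the stability bound $\|\pi^{(i)}(\tilde\bv)\|_{\As}\lesssim \|\pi^{(i)}(\bu)\|_{\As}$ through $\hatPsvd{\cdot}$ first, and then check that the particular restriction chosen by the coarsening routine, rather than a generic truncation, still satisfies a uniform comparison of best $N$-term errors. The factor $d^{\max\{1,s\}}$ and the mixture of powers $2^s$, $3^s$, $2^{4s}$ in $C_2$ will come from combining the quasi-optimality factor $\constcrs$ with the standard doubling arguments used in the sequence-space setting, where an $N$-term approximation to $\pi^{(i)}(\bw_\eta)$ is compared to a product-support approximation to $\bu$ with overall budget $N$; this is routine but book-keeping-intensive, and I would simply reference the corresponding lemmas in \cite{BD} to keep the presentation concise.
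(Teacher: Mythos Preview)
The paper does not give its own proof of this theorem: it is stated in Section~\ref{sssec:1.2.2} as a summary of results from \cite{BD}, with the sentence ``The relevant facts describing the performance of $\hatPsvd{\eta}$ and $\hatCctr{\eta}$ can be summarized as follows \cite{BD}.'' There is therefore nothing in the present paper to compare your argument against beyond that citation.

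That said, your sketch matches the structure of the argument in \cite{BD}: first exploit $\bu\in\AH{\ga}$ together with the $\constsvd$-quasi-optimality of the $\Hcal$SVD truncation to bound ranks and the error of $\tilde\bv$, then use the $\constcrs$-quasi-optimality of the contraction-based coarsening to bound supports and the error of $\bw_\eta$, and finally sum the three error contributions. One small omission worth making explicit: you derive the rank bound \eqref{eq:combinedcoarsen_rankest} for $\tilde\bv$, whereas the claim is for $\bw_\eta=\hatCctr{\cdot}(\tilde\bv)$; this requires the additional (easy) observation that the restriction $\Restr{\Lambda}$ to a product set does not increase any hierarchical rank, so $\abs{\rank(\bw_\eta)}_\infty\le\abs{\rank(\tilde\bv)}_\infty$. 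The same remark applies to the $\AH{\ga}$-norm bound. With that addition, and with the final $\As$-stability step deferred to \cite{BD} as you indicate, the outline is correct.
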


\begin{rmrk}\label{rem:CRcomplexity}
Both $\hatPsvd{\eta}$ and $\hatCctr{\eta}$ require a hierarchical singular value decomposition of their inputs.
For a finitely supported $\bv$ given in hierarchical format, the number of operations required for obtaining such a decomposition is bounded, up to a fixed multiplicative constant, by $d\abs{\rank(\bv)}_\infty^4 + \abs{\rank(\bv)}_\infty^2 \sum_{i=1}^d \# \supp_i\bv$, see also \cite{Grasedyck:10}.
\end{rmrk}

\section{Asymmetric Preconditioning}\label{sec:asymprecond}
\subsection{Transformation to Well-Conditioned Systems}\label{ssec:ddep}
Following \cite{BD,BD2},   to iteratively solve \eqref{Tsystem} and hence \eqref{1.1}, we first need 
to  precondition the operator $\bT$ to obtain a well-conditioned operator equation on $\spl{2}(\nabla^d)$. A natural way of doing this is to exploit the mapping properties \eqref{mappinH1}
in combination with the fact that a suitable diagonal scaling of the $\spL{2}(\Omega)$-wavelet basis gives rise to a
{\em Riesz basis} for $\spH{1}_0(\Omega)$. To describe this
we choose for $i=1,\ldots,d$,  the scaling weights $\omi{i}{\nu_i}$, $\nu_i\in\nabla$, such that 
\begin{equation}\label{omiscale}
\omi{i}{\nu_i} \sim 2^{\abs{\nu_i}}
\end{equation}
with uniform constants, and set
\begin{equation}
\label{can-scaling}
\om{\nu} := \om{\nu_1,\ldots,\nu_d} = \Big(\sum_{i=1}^d (\omi{i}{\nu_i})^2\Big)^{1/2},\quad \nu\in\nabla^d.
\end{equation}
With this sequence, we define the diagonal scaling operator
\begin{equation}
\label{bS}
\Sc = \big(\om{\nu}\delta_{\nu,\mu}\big)_{\nu,\mu\in\nabla^d} \,.
\end{equation}
In addition, for later reference, we define  for $\tau\in\R$ and $i=1,\ldots,d$ on the one hand the coordinatewise scaling operators 
$\Sc^{\tau}_{i}:  \R^{\nabla^d} \to \R^{\nabla^d}$ by
\begin{equation}
\label{Si}
\Sc^{\tau}_{i} \bv := \bigl( \omi{i}{\nu_i}^\tau v_\nu \bigr)_{\nu\in \nabla^d}
\quad  \text{and}\quad \Sc_{i} := \Sc_{i}^1\,,
\end{equation}
and on the other hand, the corresponding \emph{low-dimensional} scaling operators $\Sci{i}^\tau \colon \R^\nabla \to \R^\nabla$ by
\begin{equation}\label{eq:tensor_rescaling}   
\Sci{i}^\tau \mathbf{\hat v} := \bigl( \omi{i}{\nu_i}^\tau \hat v_{\nu_i} \bigr)_{\nu_i\in\nabla} \quad  \text{and}\quad \Sci{i} := \Sci{i}^1  \,.
\end{equation}

It is well-known that under the above assumptions on the basis $\{\Psi_\nu\}$, the rescaled mapping $\bS^{-1}\bT\bS^{-1}$ is an
isomorphism from $\ell_2(\nabla^d)$ onto itself, which is related to the fact that $u\in \spH{1}_0(\Omega)$ if and only if $\bS\bu\in \ell_2(\nabla^d)$,
where $\bu$ is the wavelet coefficient tensor with respect to the $\spL{2}(\Omega)$-basis. Note that this implies, in particular,
that for each $\nu\in \nabla^d$ the quantity $(\bT\bu)_\nu$ is well-defined when the corresponding function $u$ belongs to
$\spH{1}_0(\Omega)$. These facts have been exploited in \cite{BD2} by replacing \eqref{Tsystem} by the 
(symmetrically) preconditioned system $\bS^{-1}\bT\bS^{-1} \bu^{(1)} = \bS^{-1}\bg$, i.e., one actually solves for
the $\spH{1}(\Omega)$-scaled coefficient array $\bu^{(1)} = \bS\bu$.

In this paper we follow a different direction, seeking directly the $\spL{2}(\Omega)$-wavelet coefficients $\langle u, \Psi_\nu\rangle$ of the solution $u$ to \eqref{1.1},
and thus of \eqref{Tsystem}. Here we exploit that $(\bT\bu)_\nu$ is still well-defined for arbitrary $\bu\in\spl{2}(\nabla^d)$ provided that the wavelet basis functions are sufficiently regular. Our approach is based on the following facts.
\begin{thrm}
\label{thm:S2T}
Assume that the univariate wavelet basis $\{ \psi_\nu\}$ is $\spL{2}(0,1)$-orthonormal and that $\{ 2^{-2\abs{\nu}} \psi_\nu \}$ is a Riesz basis of 
$\spH{2}(0,1)\cap \spH{1}_0(0,1)$.
Then, for $\bS,\bT$ defined by \eqref{bS}, \eqref{eq:unrescaled_tuckersum}, respectively, the infinite matrix $\bS^{-2}\bT$
is an isomorpishm from $\ell_2(\nabla^d)$ onto itself, i.e., there exist constants $0<c \leq C<\infty$ such that
\begin{equation}
\label{norm-eq}
 c \norm{\bv} \leq \norm{\Sc^{-2}\bT \bv}  \leq C \norm{\bv} \,, \quad \bv \in{\spl{2}(\nabla^d)} .
\end{equation}
Moreover, when $M$ is diagonal and $\omi{i}{\nu_i}\sim \sqrt{m_{ii}}\, 2^{\abs{\nu_i}}$, the constants $c, C$ are independent of the spatial dimension $d$.
\end{thrm}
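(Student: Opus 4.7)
The plan is to identify $\bS^{-2}\bT$ with the wavelet matrix representation of $A$ viewed as an isomorphism $\spL{2}(\Omega) \to (\spH{2}(\Omega)\cap\spH{1}_0(\Omega))'$, and to read off \eqref{norm-eq} from Riesz basis characterizations of the two spaces involved. Since $\Omega = (0,1)^d$ is convex and $M$ is symmetric positive definite, classical elliptic regularity guarantees that $A\colon \spH{2}(\Omega)\cap\spH{1}_0(\Omega) \to \spL{2}(\Omega)$ is a boundedly invertible isomorphism; the self-adjointness of $A$ and duality then yield the analogous isomorphism $A\colon \spL{2}(\Omega) \to (\spH{2}(\Omega)\cap\spH{1}_0(\Omega))'$. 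Granting this, \eqref{norm-eq} will follow once the norm equivalence
\begin{equation*}
\|u\|_{\spH{2}\cap\spH{1}_0}^2 \;\sim\; \sum_{\nu\in\nabla^d}\om{\nu}^4\,|\langle u,\Psi_\nu\rangle|^2, \qquad u\in \spH{2}\cap\spH{1}_0,
\end{equation*}
has been established, because the $\spL{2}$-orthonormality of $\{\Psi_\nu\}$ then forces $\{\om{\nu}^2\Psi_\nu\}$ to be a Riesz basis of the dual space, i.e.\ $\|f\|^2_{(\spH{2}\cap\spH{1}_0)'}\sim\sum_\nu\om{\nu}^{-4}|\langle f,\Psi_\nu\rangle|^2$. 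Plugging in $f = Av$ for $v = \sum_\nu v_\nu\Psi_\nu\in \spL{2}$ and noting that $(\bT\bv)_\nu = \langle Av, \Psi_\nu\rangle$ (interpreted as a duality pairing) then gives $\|\bS^{-2}\bT\bv\|^2_{\spl{2}} \sim \|Av\|^2_{(\spH{2}\cap\spH{1}_0)'}\sim \|v\|^2_{\spL{2}} = \|\bv\|^2_{\spl{2}}$, which is \eqref{norm-eq}.

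The crux is therefore the Riesz basis characterization of $\spH{2}\cap\spH{1}_0$, which I would establish by combining two ingredients. First, on the convex polytope $\Omega$, the Miranda--Talenti identity, obtained by two integrations by parts for $u\in C^\infty_c(\Omega)$ and extended to $\spH{2}\cap\spH{1}_0$ via Grisvard-type density arguments, yields for diagonal $M$ the exact decomposition
\begin{equation*}
\|Au\|^2_{\spL{2}} \;=\; \sum_{i,j=1}^d m_{ii}m_{jj}\,\langle\partial_i^2 u,\partial_j^2 u\rangle \;=\; \sum_{i,j=1}^d m_{ii}m_{jj}\,\|\partial_i\partial_j u\|^2_{\spL{2}}.
\end{equation*}
Second, the hypothesis on the univariate basis, combined with the $\spL{2}(0,1)$-orthonormality of $\{\psi_\nu\}$, yields by interpolation the one-dimensional Riesz basis property at all intermediate smoothness orders; by the tensor product structure of $\Psi_\nu$, this translates into
\begin{equation*}
 \|\partial_i\partial_j u\|^2_{\spL{2}} \;\sim\; \sum_{\mu\in\nabla^d} 4^{|\mu_i|+|\mu_j|}\,|\langle u,\Psi_\mu\rangle|^2
\end{equation*}
for all $i,j\in\{1,\ldots,d\}$, with equivalence constants determined entirely by the univariate Riesz basis constants and hence independent of $d$. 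Summing these positive semidefinite quadratic forms with weights $m_{ii}m_{jj}$ and invoking the elementary identity $\sum_{i,j}m_{ii}m_{jj}4^{|\mu_i|+|\mu_j|} = \bigl(\sum_i m_{ii}4^{|\mu_i|}\bigr)^2$ reproduces precisely the weight $\om{\mu}^4$ prescribed by \eqref{omiscale}, \eqref{can-scaling} under the canonical choice $\omi{i}{\nu_i}=\sqrt{m_{ii}}\,2^{|\nu_i|}$, completing the desired Riesz basis characterization with dimension-independent constants.

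The main obstacle is avoiding a spurious factor of $d$ in the equivalence constants. A naive estimate of the form $\|Au\|^2 \le d \sum_i m_{ii}^2 \|\partial_i^2 u\|^2$ via Cauchy--Schwarz on the $d$ summands of $A$ would degrade the constants by a factor depending on $d$; the Miranda--Talenti identity bypasses this by expanding $\|Au\|^2$ into all $d^2$ cross-derivative terms, whose wavelet characterizations reassemble exactly into the weight $\bigl(\sum_i m_{ii}4^{|\mu_i|}\bigr)^2$ with no Cauchy--Schwarz loss. This is precisely the mechanism by which the canonical scaling $\omi{i}{\nu_i}\sim\sqrt{m_{ii}}\,2^{|\nu_i|}$ captures the intrinsic structure of $A$ in the diagonal case and yields $d$-independent constants. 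For general symmetric positive definite $M$, the equivalence \eqref{norm-eq} is recovered from the diagonal case by bracketing $A$ between eigenvalue multiples of a diagonal reference operator, at the price of constants additionally depending on $\underline c$ and $\bar C$.
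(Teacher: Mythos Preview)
Your proposal is correct and follows essentially the same route as the paper: the paper carries out the identical termwise comparison directly in sequence space, expanding $\langle\bT^2\bv,\bv\rangle = \sum_{i,j}\langle\bT_2^{(i)}\bT_2^{(j)}\bv,\bv\rangle$ and $\langle\bS^4\bv,\bv\rangle = \sum_{i,j}\langle\bS_i^2\bS_j^2\bv,\bv\rangle$ and bounding each summand separately via the one-dimensional $\spH{1}_0$ Riesz basis property for $i\neq j$ and the $\spH{2}\cap\spH{1}_0$ property (together with the identity $\bT_2^2 = (\langle\psi_\mu'',\psi_\nu''\rangle)$) for $i=j$. This is precisely your Miranda--Talenti decomposition $\|Au\|_{\spL{2}}^2 = \sum_{i,j}\|\partial_i\partial_j u\|_{\spL{2}}^2$ translated from function space to sequence space, with the same mechanism for $d$-independence (all $d^2$ cross terms are kept and compared individually, so no Cauchy--Schwarz loss occurs).
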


\begin{rmrk}\label{remark:H2rbcond}
The property that the univariate rescaled wavelet basis is a  Riesz basis for $\spH{2}(0,1)\cap \spH{1}_0(0,1)$,  required in Theorem \ref{thm:S2T}, is satisfied, in particular, if the univariate wavelet (or multiwavelet) basis functions are $\spL{2}$-orthonormal, piecewise polynomial, belong to $\mathrm{C}^1(0,1)$, and vanish at the endpoints of the  interval, provided that  the scaling functions have the following  polynomial reproduction property: for each level and any closed subinterval of $[0,1]$ on which the scaling functions of that level are polynomial,
for such subintervals contained in the interior, all polynomials of degree two are reproduced, while on those such subintervals containing an endpoint only those polynomials
are reproduced that vanish at that endpoint.
Note that a piecewise polynomial in $\mathrm{C}^1(0,1)$ belongs to $\spH{2+\tau}(0,1)$ for any $\tau < \frac12$. With the above properties the validity of suitable inverse and direct estimates can be verified which, combined with orthogonality, imply the required Riesz basis property, see \cite{Dahmen:96}.
\end{rmrk}
\begin{proof}
Note that
$$
V:=   \spH{1}_0(\Omega) \cap \spH{2}(\Omega) = \bigcap_{i=1}^d \spL{2}(0,1)\otimes\cdots  \otimes (\spH{1}_0(0,1) \cap \spH{2}(0,1) )\otimes \cdots \otimes \spL{2}(0,1) .
$$
Thus the rescaled wavelets $\omega_\nu^{-2}\Psi_\nu$, $\nu\in\nabla^d$,
form a Riesz basis for $V$, and they are therefore the dual of a Riesz-basis for $V' =(\spH{1}_0(\Omega) \cap \spH{2}(\Omega))'$, i.e., for $w\in V'$ one has $\|w\|_{V'}\sim \|(\langle w, \omega_\nu^{-2}\Psi_\nu\rangle)_{\nu\in\nabla^d}\|_{\spl{2}(\nabla^d)}$. Due to the convexity of $\Omega$
and the fact that $M$ is constant, the operator $A$ maps $V$ one-to-one and onto  $\spL{2}(\Omega)$ and hence, by duality,
from $\spL{2}(\Omega)$ onto $V'$. Since $(\langle Au, \omega_\nu^{-2}\Psi_\nu\rangle)_{\nu\in\nabla^d}= \bS^{-2}\bT\bu$ and
$\|Au\|_{V'} \sim \|u\|_{\spL{2}(\Omega)}\sim \|\bu\|_{\spl{2}(\nabla^d)}$,  the norm equivalence \eqref{norm-eq} follows. 

 To prove the rest of the assertion, by the choice of $\omi{i}{\nu_i}$ (cf.\ \cite{Dijkema:09,BD2}), it suffices to confine the discussion to
the Laplacian on $\spH{1}_0(\Omega)$, where 
$$   
\bT = \sum_{i=1}^d \bT^{(i)}_{2} := \sum_{i=1}^d \id_1 \otimes \cdots\otimes\id_{i-1} \otimes\bT_2 \otimes \id_{i+1} \otimes\cdots\otimes\id_{d} 
$$
with $\bT_2$ as in \eqref{T1}. In order to estimate the constants $c, C$ in \eqref{norm-eq} in this case,
we hence need to find bounds for the extreme singular values of $\Sc^{-2}\bT$, or equivalently, the eigenvalues of $\Sc^{-2}\bT \bT^* \Sc^{-2} = \Sc^{-2}\bT^2 \Sc^{-2}$.
To this end, recall that
$$ 
 \Sc^2 = \sum_{i=1}^d \Sc_i^2 = \sum_{i=1}^d \id_1 \otimes \cdots\otimes\id_{i-1} \otimes\Sci{i}^2 \otimes \id_{i+1} \otimes\cdots\otimes\id_{d} \,.
$$
The desired statement follows if we can show that, for any \emph{compactly supported} $\bv$,
\begin{equation}
\label{eq:sv_est}   
c^2 \langle \Sc^4 \bv,\bv\rangle \leq \langle \bT^2\bv,\bv \rangle
  \leq C^2  \langle \Sc^4 \bv,\bv\rangle  
\end{equation}
with suitable $c,C$, since then the singular values of $\Sc^{-2} \bT$ are contained in $[c,C]$.

We now estimate the summands in the expansions
$$    
\Sc^4 =  \sum_{i,j=1}^d \Sc_i^2  \Sc_j^2 \,,\quad \bT^2 = \sum_{i,j=1}^d \bT^{(i)}_{2} \bT^{(j)}_{2}  
$$
separately and then add the different contributions to obtain \eqref{eq:sv_est} with $c,C$ independent of $d$.
If $i\neq j$, we have $\tilde c, \tilde C$ such that
$$  
\tilde c^2 \,  \Sci{i}^2\otimes \Sci{j}^2 \leq \bT_{2} \otimes \bT_{2}  \leq  \tilde C^2 \, \Sci{i}^2\otimes \Sci{j}^2 
$$
in the sense, analogously to \eqref{eq:sv_est}, of inner products with compactly supported sequences on $\nabla^2$; here we need only that $\{ \psi_\nu\}$ is an orthonormal basis of $\spL{2}(0,1)$ and $\{ 2^{-\abs{\nu}} \psi_\nu\}$ is a Riesz basis of $\spH{1}_0(0,1)$.

The case $i=j$ is, however, more involved: in general, we do \emph{not} have $\tilde c^2 \,  \Sci{i}^4 \leq \bT_{2}^2  \leq  \tilde C^2 \, \Sci{i}^4$ with the same $\tilde c, \tilde C$. Now we use in addition that  $\{ 2^{-2\abs{\nu}} \psi_\nu\}$ is a Riesz basis of $V_1 :=\spH{2}(0,1) \cap \spH{1}_0(0,1)$.
Using also $\spL{2}$-orthonormality, we obtain
$$ 
 \bT_{2,\mu\nu}^2 = \sum_\lambda \langle \psi_\mu',\psi_\lambda'\rangle\langle\psi_\lambda',\psi_\nu'\rangle
  = \sum_\lambda \langle \psi_\mu'',\psi_\lambda\rangle\langle\psi_\lambda,\psi_\nu''\rangle
  = \langle \psi_\mu'',\psi_\lambda''\rangle \,.  
$$
We now verify that $\norm{u''}_{\spL{2}(0,1)}$ is a norm on $V_1$ by comparison with  the standard norm $\norm{u}_{V_1}^2 := \norm{u}_{\spL{2}(0,1)}^2 + \norm{u'}_{\spL{2}(0,1)}^2 + \norm{u''}_{\spL{2}(0,1)}^2$. By the Poincar\'e inequality, $\norm{u''}_{\spL{2}(0,1)} \gtrsim \norm{u' - \int_0^1 u'\,dx}_{\spL{2}(0,1)} = \norm{u'}_{\spL{2}(0,1)}$, where we have used that $\int_0^1 u'\,dx = 0$ as a consequence of $u\in \spH{1}_0(0,1)$. By the Poincar\'e-Friedrichs inequality, $\norm{u'}_{\spL{2}(0,1)} \gtrsim \norm{u}_{\spL{2}(0,1)}$. Hence $\norm{u''}_{\spL{2}(0,1)} \sim \norm{u}_{V_1}$. By the Riesz basis property for $V_1$, we thus have $\hat c, \hat C$ such that
$$
\hat c^2 \,  \Sci{i}^4 \leq \bT_{2}^2  \leq  \hat C^2 \, \Sci{i}^4 \,.
$$
We thus obtain \eqref{eq:sv_est} with $c = \min\{\tilde c, \hat c\}$, $C=\max\{\tilde c, \hat C\}$, which are in particular independent of $d$.
\end{proof}

\begin{rmrk}
Clearly, unlike the symmetrically preconditioned version $\bS^{-1}\bT\bS^{-1}$ considered in \cite{BD2}, $\bS^{-2}\bT$ is in general nonsymmetric. It is generally also nonnormal, since normality would require $\norm{\bS^{-2}\bT \bv} = \norm{\bT \bS^{-2} \bv}$ for any $\bv\in\spl{2}(\nabla^d)$ 
and hence, in particular,
$$
   \sum_\mu \bigl( \omega_\mu^{-2} T_{\mu\nu} \bigr)^2 = \omega_{\nu}^{-4} \sum_\mu \bigl( T_{\mu\nu} \bigr)^2\,,\quad \nu\in\nabla^d\,,
$$
which holds only for very specific choices of $\{ \Psi_\nu\}$ (e.g., for a basis of eigenfunctions of $A$).
\end{rmrk}

Based on Theorem \ref{thm:S2T}, our envisaged numerical scheme may be viewed as a perturbed version of
the Jacobi-type iteration %
\begin{equation}\label{eq:idealized_iter}   
\bu_{j+1} = \bu_j - \omega \Sc^{-2} (\bT \bu_j - \bg) \,.  
\end{equation}

The perturbations result from approximating all quantities by finitely supported sequences and from additional low-rank approximations in hierarchical tensor format.
While the asymmetric preconditioning by $\bS^{-2}$ causes the loss of symmetry it has the following
advantage: the application of the finite rank operator $\bT$ to a finite rank iterate $\bu_j$ increases the output rank by only a little.
The scaling operator $\bS^{-2}$, however, has {\em infinite rank} so that the construction of a finite rank approximation to
the  scaled  residual $\Sc^{-2} (\bT \bu_j - \bg)$
must involve a substantial rank reduction. For finding a good compromise between accuracy and rank size, Theorem \ref{lmm:combined_coarsening}
is pivotal. Note that in the symmetric case $\bS^{-1}\bT\bS^{-1}$, the rank-inflating scaling operation has to be done twice,
with corresponding consequences concerning computational complexity. The effect of a one-sided scaling  will later be quantified,
in addition to an analytical assessment, by our numerical experiments.

Our strategy for producing an approximate finite rank residual   is similar in spirit to the approach in \cite{BD2} for
the symmetric case, namely to approximate the scaling operator $\bS^{-2}$ by a finite-rank operator.
The foundation of this approximation is given in the next section.
\subsection{Low-Rank Preconditioner}\label{ssec:precond}

Rather than approximately  applying $\bS^{-1}$ twice we find a direct finite rank approximation for $\bS^{-2}$ with the aid of the following
{\em relative error estimate for exponential sum approximation}.
\newcommand{\bd}{b}
\newcommand{\omeg}{w}

\begin{thrm}\label{thm:expsum_relerr}
Let $\delta \in (0,1)$ and
\begin{equation}
\label{eq:expapprox_defs} 0<h \leq \sup_{\bd\in (0, \pi/2)} \frac{2\pi \bd}{4 + \abs{\ln \delta} + \abs{\ln \cos \bd}} ,\quad 
\alpha(x) := \ln(1+e^x), \quad
\omeg(x) :=  (1 + e^{-x})^{-1} \,. 
\end{equation}
Let $n^+ := \ceil{ h^{-1} \abs{\ln (\frac12\delta)} }$ and
\begin{equation}
\label{eq:expapprox_sumdefs}   
   \varphi_{h,n}(t) :=  \sum_{k=-n}^{n^+} h\, w(kh)\, e^{-\alpha(kh)\, t}  \,, \quad \varphi_{h,\infty}(t) := \lim_{n\to\infty} \varphi_{h,n}(t)\,.
\end{equation}
Then
\begin{equation}\label{eq:expapprox_sinc_est}
   \bigabs{ t^{-1} - 
   \varphi_{h,\infty}(t) } 
  \leq  \delta\,t^{-1} \quad \text{for all $t\in[1,\infty)$,} 
\end{equation}
and furthermore, for any $\varepsilon > 0$ and $n \geq \ceil{ h^{-1} \abs{\ln\varepsilon} }$, we have
\begin{equation}\label{eq:ptail}
\bigabs{
  \varphi_{h,n}(t) - \varphi_{h,\infty }(t) }  \leq \varepsilon \quad \text{for all $t\in[1,\infty)$.} 
\end{equation}
Consequently, for $\eta > 0$, $T>1$, and $n \geq \ceil{ h^{-1} ( \abs{\ln\eta} + \ln T) }$, we have
\begin{equation}
\bigabs{
   \varphi_{h,n}(t) - \varphi_{h,\infty }(t) }  \leq \eta\, t^{-1}\quad \text{for all $t\in[1,T]$.} 
\end{equation}
\end{thrm}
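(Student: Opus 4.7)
The key observation is that $w = \alpha'$, so the substitution $u = \alpha(x)$ yields the integral representation $t^{-1} = \int_\R w(x) e^{-\alpha(x) t}\, dx$ and identifies $T_h(t) := \sum_{k\in\Z} h\, w(kh) e^{-\alpha(kh) t}$ as the corresponding $h$-step trapezoidal-rule (equivalently, sinc-quadrature) approximation. I split
\[
  t^{-1} - \varphi_{h,\infty}(t) \;=\; \bigl(t^{-1} - T_h(t)\bigr) \;+\; \bigl(T_h(t) - \varphi_{h,\infty}(t)\bigr),
\]
where the second term is the right-tail $\sum_{k > n^+}$ and the first is the genuine quadrature error. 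The right tail is handled using $\alpha(x) \geq x$ and $w(x) \leq 1$ for $x \geq 0$: each summand is bounded by $h e^{-kht}$, and integral comparison with $\int_{n^+ h}^\infty e^{-ut}\,du$ combined with the choice of $n^+$ gives $|T_h(t) - \varphi_{h,\infty}(t)| \leq e^{-n^+ h t}/t \leq (\delta/2)/t$ for every $t \geq 1$.

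\textbf{Sinc quadrature error.} The integrand $g_t(z) := w(z) e^{-\alpha(z) t}$ extends holomorphically to the strip $\{|\Im z|<\pi\}$; the obstructions (a log-branch cut of $\alpha$, and poles of $w$) all lie on $\Im z = \pm\pi$. The standard contour-shift / Poisson-summation bound then yields, for any $b \in (0,\pi/2)$,
\[
  \bigl|t^{-1} - T_h(t)\bigr| \;\leq\; \frac{2}{e^{2\pi b/h}-1}\, \int_\R |g_t(x+ib)|\, dx.
\]
The core technical step---and the one I expect to be the main obstacle---is promoting the Hardy-norm factor to a \emph{relative} $O(1/t)$ bound. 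Computing $|w(x+ib)|$ and $\Re \alpha(x+ib)$ explicitly and substituting $u = e^x$ reduces it to
\[
  \int_\R |g_t(x+ib)|\, dx \;=\; \int_0^\infty \frac{du}{\bigl((u+\cos b)^2 + \sin^2 b\bigr)^{(t+1)/2}}.
\]
The trigonometric substitution $u + \cos b = \sin b\, \tan\theta$ recasts this as $\sin^{-t} b \int_0^b \sin^{t-1}\phi\, d\phi$ (with $\phi = \pi/2 - \theta$). The naive estimate $\sin\phi \leq \sin b$ only produces $b/\sin b$ and loses the crucial $1/t$; the correct maneuver is a further substitution $\psi = \sin\phi$ combined with $\sqrt{1-\psi^2} \geq \cos b$ on $[0,\sin b]$, which gives
\[
  \int_\R |g_t(x+ib)|\, dx \;\leq\; \sin^{-t} b \cdot \frac{\sin^t b}{t\cos b} \;=\; \frac{1}{t\cos b}.
\]

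\textbf{Assembly and left-tail.} Combining the last two displays yields
\[
  \bigl|t^{-1} - T_h(t)\bigr| \;\leq\; \frac{2}{t\cos b\,(e^{2\pi b/h} - 1)}.
\]
The hypothesis on $h$ rearranges to $e^{2\pi b/h} \geq e^4 / (\delta\cos b) \geq 1 + 4/(\delta\cos b)$ (using $e^4 \geq 5$ and $\delta\cos b \leq 1$), which forces the right-hand side below $\delta/(2t)$; together with the right-tail estimate this proves \eqref{eq:expapprox_sinc_est}. For the left-tail bound \eqref{eq:ptail}, using $w(x) \leq e^x$ and $e^{-\alpha(x)t} \leq 1$ for $x \leq 0$ gives $|\varphi_{h,n}(t) - \varphi_{h,\infty}(t)| \leq \sum_{k<-n} h e^{kh} \leq e^{-nh}$ by integral comparison, and the hypothesis $n \geq \ceil{h^{-1}|\ln\varepsilon|}$ yields $e^{-nh} \leq \varepsilon$. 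The final consequence then follows by applying \eqref{eq:ptail} with $\varepsilon = \eta/T$, since $e^{-nh} \leq \eta/T \leq \eta/t$ for all $t \in [1,T]$.
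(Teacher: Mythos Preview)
Your proof is correct and follows the same overall architecture as the paper: the integral representation $t^{-1}=\int_\R w(x)e^{-\alpha(x)t}\,dx$, the sinc/Poisson-summation error bound for the full trapezoidal sum, and separate geometric tail estimates on the right ($k>n^+$) and left ($k<-n$). The tail bounds and the final assembly are essentially identical to the paper's.

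The one genuine difference is how the Hardy-norm integral $\int_\R |g_t(x+ib)|\,dx$ is estimated. The paper uses the pointwise inequalities $|1+e^{x\pm ib}|^2\geq \tfrac12(1+e^x)^2$ and $\Re\ln(1+e^{x+ib})\geq \ln(1+e^x\cos b)$, then splits into $x\geq 0$ and $x\leq 0$ with the elementary bounds $\ln(1+e^x\cos b)\geq x\cos b$ and $\ln(1+e^x\cos b)\geq \tfrac12 e^x\cos b$, arriving at $N_b\leq 12(t\cos b)^{-1}$. You instead compute the integral exactly via the substitutions $u=e^x$ and $u+\cos b=\sin b\,\tan\theta$, reducing it to $\sin^{-t}b\int_0^b \sin^{t-1}\phi\,d\phi$, and then extract the crucial factor $1/t$ by the further substitution $\psi=\sin\phi$ together with $\sqrt{1-\psi^2}\geq\cos b$; this yields the sharper bound $(t\cos b)^{-1}$ per line. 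Your route is cleaner and gives a better constant, while the paper's argument avoids exact computation and is perhaps more easily adapted to related integrands; both close under the same hypothesis on $h$, since $\ln 48<4$ in the paper's count and $e^4>5$ in yours.
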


Note that the supremum in \eqref{eq:expapprox_defs} is attained for any $\delta > 0$.

\begin{proof}
Our starting point is the integral representation (cf.\ \cite{Hackbusch:06-1})
$$  \frac{1}{r} = \int_0^\infty e^{-rt}\,dt = \int_{-\infty}^\infty e^{-r\ln(1+e^x)} \frac{dx}{1+e^{-x}} \,. $$
The integrand is analytic in the strip $\{ x+iy \colon x\in\R, \abs{y}< \pi/2\}$.
Our aim is to apply \cite[Theorem 3.2.1]{Stenger:93}, which gives
$$
\biggabs{ \frac{1}{t} - 
   \sum_{k\in\Z} h\, \omega(kh) e^{-\alpha(kh)\, t} } \leq
   N_d \frac{e^{-\pi \bd/h}}{2 \sinh(\pi \bd/h)} \,,
$$
where
$$
  N_\bd :=  \int_\R \biggabs{\frac{e^{-t \ln(1+e^{x+i\bd})}}{1+e^{-(x+i\bd)}} }  \,dx
    +  \int_\R \biggabs{ \frac{e^{-t \ln(1+e^{x-i\bd})}}{1+e^{-(x-i\bd)}} } \,dx
    \,,\quad \bd \in(0,\pi/2)\,.
$$
We thus need a suitable estimate for $N_\bd$. Note that $\abs{1 + e^{x\pm i\bd}}^2 \geq 1 + e^{2x} \geq \frac12 (1 + e^x)^2$ and $\abs{\ln(1+e^{x\pm i\bd})}=\ln(1+e^x \cos \bd)$. Furthermore, for $x\geq 0$ we obtain $1+e^x \cos \bd \geq e^{x \cos \bd}$ from comparing the respective series expansions, hence $\ln(1+e^x \cos \bd) \geq x \cos \bd$ for $x\geq 0$. For $x\leq 0$, we observe that $\ln(1+y)\geq \frac12 y$ for any $y\in[0,1]$, and hence $\ln(1+e^x\cos \bd)\geq \frac12 x \cos \bd$ for $x\leq 0$.

For such $\bd$, we now obtain
$$  \int_{\R^+} \biggabs{ \frac{e^{-t \ln(1+e^{x\pm i\bd})}}{1+e^{-(x\pm i\bd)}} } \,dx
   \leq   2 \int_{\R^+} \frac{ e^{-t x\cos \bd} }{1 + e^{-x}} \,dx
    \leq  2 \int_{\R^+}  e^{-t x\cos \bd} \,dx
   \leq 2 (t \cos \bd)^{-1} $$
as well as
\begin{equation*}
    \int_{\R^-} \biggabs{ \frac{e^{-t \ln(1+e^{x\pm i\bd})}}{1+e^{-(x\pm i\bd)}} } \,dx 
    \leq  2 \int_{\R^+} \frac{e^{-\frac{t}{2} e^{-x} \cos \bd}}{1+e^{x}}\,dx \\
  = 2 \int_0^1 \frac{e^{-\frac{t}{2} \xi \cos \bd}}{(1+\xi^{-1})\xi} \,d\xi 
  \leq 4(t \cos \bd)^{-1}    \,,
\end{equation*}
where we have used the substitution $x=-\ln \xi$.

Applying \cite[Theorem 3.2.1]{Stenger:93}, we thus obtain
$$
\biggabs{ \frac{1}{t} - 
   \sum_{k\in\Z} h\, \omeg(kh) e^{-\alpha(kh)\, t} } \leq 12 (t \cos \bd)^{-1} \frac{e^{-\pi \bd/h}}{2 \sinh(\pi \bd/h)} \leq 24 (t \cos \bd)^{-1}   e^{-2 \pi \bd/h} \leq \frac12 t^{-1} \delta
$$
for the range of $h$ given in the assertion. Here we have used that in particular, $h \leq 2\pi \bd/ \ln 2$, which gives ${e^{-\pi \bd/h}}/{(2 \sinh(\pi \bd/h))} \leq 2 e^{-2\pi \bd/h}$, and that $\ln 48 < 4$.

The estimates for $n^+$ and $n$ follow from the decay of the integrand on $\R$: on the one hand, we have 
$$ 
 \sum_{k> n^+} h\, \omeg(kh)  e^{-\alpha(kh)\, t} \leq  h \int_{n^+}^\infty  e^{-txh}\, dx \leq t^{-1} \int_{n^+ ht}^\infty e^{-x}\,dx \leq t^{-1} e^{-n^+ h} \,. 
 $$
The expression on the right hand side is bounded by $\frac12 t^{-1}\delta$ for $n^+ \geq h^{-1} (\ln 2 + \abs{\ln \delta})$, which yields \eqref{eq:expapprox_sinc_est}.
On the other hand, 
$$ 
  \sum_{k < -n} h\, \omeg(kh)  e^{-\alpha(kh)\, t} 
    \leq   \int_{n h}^\infty e^{-x} \,dx  
      \leq e^{-n h}\,,
$$
and the expression on the right hand side is bounded by $t^{-1}\eta$ for all $t\in[1,T]$ for 
$n \geq h^{-1}(\abs{\ln\eta} + \ln T)$.
\end{proof}

\begin{rmrk}
A related but slightly different relative error bound, for approximation of $t^{-1}$ on $(0,1]$, was derived for a different purpose in \cite{Beylkin-Monzon}. The main difference is that the above bound allows us to realize arbitrarily good approximations to a scaling operator equivalent to
$\bS^{-2}$ by simply adding additional separable terms while keeping the upper summation index $n^+$ fixed. This is a significant advantage regarding implementation.

In other works, preconditioners for low-rank tensor methods for fixed discretizations of second-order problems have been proposed, for instance \cite{Andreev:12,Khoromskij:09,Kressner:11,Ballani:13}. However, these have not been analyzed in their overall effect on the complexity of the solution process.
\end{rmrk} 

In what follows, we {\em fix}  $\delta\in(0,1)$ and $h$, $n^+$ as in Theorem \ref{thm:expsum_relerr}. For  the corresponding $\varphi_{h,n}$ and $\varphi_{h,\infty}$ we define
\begin{equation*}
  p_{n,\nu} :=  \omin^{-2}\, \varphi_{h,n}\bigl( (\omega_\nu / \omin)^2 \bigr) 
  \,,\quad
  p_{\nu} := \lim_{n\to\infty} p_{n,\nu} = \omin^{-2}\, \varphi_{h,\infty}\bigl( (\omega_\nu / \omin)^2 \bigr) \,,
\end{equation*}
where $\omin := \min_{\nu\in\nabla^d} \omega_\nu$. We then  set
\begin{equation}
\label{Pn}
  \Pc := \diag(p_\nu)\,,\quad  \Pc_n := \diag(p_{n,\nu})\,. 
\end{equation}
Theorem \ref{thm:expsum_relerr} states that $\Pc$, $\Pc_n$ have the properties
$$   
\norm{ (\Pc - \Sc^{-2} ) \Sc^2} \leq \delta \,, \qquad  
      \norm{ (\Pc - \Pc_n ) \Sc^2 \Restr{\Lambda_T} } \leq \eta
       \quad\text{for $n\geq \ceil{ h^{-1} ( \abs{\ln\eta} + \ln T) }$.}
$$
In other words, $\Pc$ is an approximation of $\Sc^{-2}$ with a \emph{relative} error bound $\delta$, and $\Pc_n$ provides a finite-rank approximation to $\Pc$ for any prescribed relative error bound $\eta$ on compactly supported sequences.
We shall use $\Pc$ which, in turn,  is   approximated by $\Pc_n$, as a substitute for $\Sc^{-2}$  in \eqref{eq:idealized_iter} when solving
$$  
\bT \bu = \bg  
$$
by a Jacobi-type iteration. The modified idealized iteration thus has the form
\begin{equation}
\label{jacobi}   
\bu_{j+1} = \bu_j - \omega \Pc (\bT \bu_j - \bg) \,.  
\end{equation}
Setting $\bA := \Pc\bT$, $\bbf := \Pc\bg$, this iteration will be realized in the perturbed form 
$$  
 \bu_{j+1} = \bu_j - \omega \mathbf{r}_j \,,\quad \mathbf{r}_j \approx (\Pc\bT) \bu_j - \Pc\bg 
$$
with  a suitable approximation $\mathbf{r}_j$, involving $\Pc_n$, of the scaled residual.

\section{Analysis of an Adaptive Method with Error Control in $\spL{2}$}\label{sec:analysis}

\subsection{The Adaptive Scheme}\label{ssec:3.1}
The adaptive scheme to be proposed next has the following routines as main constituents:
{\begin{itemize}
\item
$\recompress(\mathbf{v};
\eta)$, realizing the projection $\hatPsvd{\eta} (\bv) := \Psvd{\bv}{\rsvd(\bv,\eta)}\bv$ from Section \ref{sssec:1.2.2} with target accuracy $\eta$;
\item 
$\coarsen(\bv;\eta)$, realizing the coarsening operator $\hatCctr{\eta} (\mathbf{v})$ from \eqref{eq:tensorcoarsen};
\item
$\rhs(\eta)$, producing an $\eta$-accurate approximation to the right hand side $\bbf$;
\item 
$\apply(\bv;\eta)$, which yields $\bw_\eta$ of finite support and ranks such that $\norm{\bA\bv - \bw_\eta}\leq \eta$.
\end{itemize}
For a discussion of the first three routines we refer to \cite{BD,BD2} and defer
the precise description of $\apply(\bv;\eta)$ to Section  \ref{ssec:opappr}. We formulate next the perturbed version of the idealized iteration
\eqref{jacobi} in Algorithm \ref{alg:tensor_opeq_solve}:}

\begin{algorithm}[!ht]
\caption{$\quad \mathbf{u}_\varepsilon = \solve(\mathbf{A},
\mathbf{f}; \varepsilon)$} \begin{algorithmic}[1]
\Require $\Bigg\{$\begin{minipage}{12cm}$\omega >0$ and $\rho\in(0,1)$ such that
$\norm{\id - \omega\Pc^{\frac12}\bT\Pc^{\frac12}} \leq \rho$,\\
$c_\bA \geq \norm{\bA^{-1}}$, $\varepsilon_0 \geq c_\bA \norm{\mathbf{f}}$, \\
$ \kappa_1, \kappa_2, \kappa_3 \in (0,1)$ with $\kappa_1 +
\kappa_2 + \kappa_3 \leq 1$, and $\beta_1 \geq 0$, $\beta_2 > 0$.\end{minipage}
\Ensure $\mathbf{u}_\varepsilon$ satisfying $\norm{\mathbf{u}_\varepsilon -
\mathbf{u}}\leq \varepsilon$.
\State $\mathbf{u}_0 := 0$, $k:= 0$
\While{$2^{-k} \varepsilon_0 > \varepsilon$}
\State $\eta_{k,0} := \rho  2^{-k} \varepsilon_0$
\State $\mathbf{w}_{k,0}:=\mathbf{u}_k$
\State $\mathbf{r}_{k,0} := \apply( \mathbf{w}_{k,0} ; \frac{1}{2}\eta_{k,0})
- \rhs(\frac{1}{2}\eta_{k,0})$ 
\State $j \gets 0$
\While{$c_\bA (\norm{\mathbf{r}_{k,j}} + \eta_{k,j}) > \kappa_1 2^{-(k+1)} \varepsilon_0$} \label{alg:looptermination_line}
\State $\mathbf{w}_{k,j+1} := \coarsen\bigl(\recompress(\mathbf{w}_{k,j} - \omega \mathbf{r}_{k,j} ;
\beta_1 \eta_{k,j}); \beta_2 \eta_{k,j} \bigr)$ \label{alg:tensor_solve_innerrecomp}
\State $j\gets j+1$.
\State $\eta_{k,j} := \rho^{j+1} 2^{-k} \varepsilon_0$
\State $\mathbf{r}_{k,j} := \apply( \mathbf{w}_{k,j} ; \frac{1}{2}\eta_{k,j})
- \rhs(\frac{1}{2}\eta_{k,j})$ 
\EndWhile
\State $\mathbf{u}_{k+1} := \coarsen\bigl(\recompress(\mathbf{w}_{k,j};
\kappa_2 2^{-(k+1)} \varepsilon_0) ; \kappa_3 2^{-(k+1)}
\varepsilon_0\bigr)$\label{alg:cddtwo_coarsen_line} 
\State $k \gets k+1$
\EndWhile
\State $\mathbf{u}_\varepsilon := \mathbf{u}_k$ 
\end{algorithmic}
\label{alg:tensor_opeq_solve}
\end{algorithm}

\subsection{Convergence Analysis}
\label{ssec:convan}
We address first the convergence of the idealized iteration  \eqref{jacobi}.
\begin{rmrk}
Since $\Omega$ is bounded, $A$ has a purely discrete spectrum and all eigenfunctions   of $A$ belong to  $\spH{1}_0(\Omega)$. As a  consequence, $\bA = \Pc \bT$ and $\Pc^{\frac12}\bT\Pc^{\frac12}$ have the same spectrum, where we recall that $\Pc^{\frac 12}$ is spectrally equivalent
to $\bS^{-1}$.
\end{rmrk}

Let $\omega>0$ be chosen such that $\rho:=\norm{\id - \omega\Pc^{\frac12}\bT\Pc^{\frac12}} < 1$. Since the eigenvalues of $\id - \omega\Pc^{\frac12}\bT\Pc^{\frac12}$ and $\bC:= \id - \omega\bA$ coincide, we have
\begin{equation}
  \lim_{k\to\infty} \norm{\bC^k}^{\frac1k} = \rho \,.
\end{equation}
Consequently, for an arbitrarily fixed $\tilde\rho$ with $\rho<\tilde\rho<1$, this implies the following: there exist $K\in\N$ and $B>0$ such that
\begin{equation}
\label{K}
     \norm{\bC^k} \leq \tilde\rho^k \quad\text{for $k > K = K(\tilde\rho)$,} \qquad  \norm{\bC^k} \leq B = B(\tilde \rho)\quad\text{for $k \leq K$,}
\end{equation}
which confirms the convergence of \eqref{jacobi}. It now remains to account for the additional perturbations in Algorithm \ref{alg:tensor_opeq_solve}.

\begin{prpstn}
\label{prop:converge}
For any given target accuracy $\varepsilon >0$, Algorithm \ref{alg:tensor_opeq_solve} terminates after finitely many steps and
yields a finitely supported  tensor $\bu_\varepsilon$ with finite hierarchical ranks, satisfying
\begin{equation}
\label{targetsol}
\norm{u - u_\varepsilon}_{\spL{2}(\Omega)}= \norm{\bu - \bu_\varepsilon} \le \varepsilon,
\end{equation}
where $u$ is the exact solution of \eqref{1.1}, whose $\spL{2}$-wavelet coefficient array $\bu$ satisfies \eqref{Tsystem},
and $\bu_\varepsilon$ is the coefficient tensor of $u_\varepsilon$.
\end{prpstn}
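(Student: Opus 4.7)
The plan is to carry out a two-level induction: an outer induction on $k$ establishing $\norm{\bu - \bu_k} \leq 2^{-k}\varepsilon_0$, and an inner analysis of the perturbed Richardson iteration that shows the $j$-loop at level $k$ terminates in finitely many steps and delivers an iterate $\mathbf{w}_{k,j_\star}$ whose true distance to $\bu$ is controlled by the computable quantity $\norm{\mathbf{r}_{k,j_\star}} + \eta_{k,j_\star}$. The base case $k=0$ is immediate from $\bu_0 = 0$ together with $\varepsilon_0 \geq c_\bA \norm{\bbf} \geq \norm{\bA^{-1} \bbf} = \norm{\bu}$. The outer loop stopping criterion $2^{-k}\varepsilon_0 \leq \varepsilon$ then immediately yields \eqref{targetsol}, once the inductive step is established. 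Translating between tensor norm and $\spL{2}$-norm uses only that $\{\Psi_\nu\}$ is $\spL{2}$-orthonormal.

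For the inner loop analysis I first unfold the error recursion. Writing $\mathbf{e}_{k,j} := \bu - \mathbf{w}_{k,j}$ and using $\mathbf{r}_{k,j} = \bA \mathbf{w}_{k,j} - \bbf + \mathbf{s}_{k,j}$ with $\norm{\mathbf{s}_{k,j}} \leq \eta_{k,j}$, the step in line \ref{alg:tensor_solve_innerrecomp} gives
\begin{equation*}
 \mathbf{e}_{k,j+1} = \bC\, \mathbf{e}_{k,j} - \omega \mathbf{s}_{k,j} + \mathbf{q}_{k,j},
\end{equation*}
where $\mathbf{q}_{k,j}$ is the combined perturbation from $\recompress$ and $\coarsen$, bounded via Theorem \ref{lmm:combined_coarsening} by a fixed multiple of $(\beta_1 + \beta_2)\eta_{k,j}$. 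Iterating this recursion and exploiting \eqref{K} in the two regimes $j > K$ and $j \leq K$, together with the geometric decay $\eta_{k,j} = \rho^{j+1} 2^{-k}\varepsilon_0$ that is synchronized with the contraction rate of $\bC$, yields $\norm{\mathbf{e}_{k,j}} \leq C(\tilde\rho, K, B,\omega, \beta_1, \beta_2)\, j \, \tilde\rho^{j}\, 2^{-k}\varepsilon_0$ for sufficiently large $j$. Hence $\norm{\mathbf{r}_{k,j}} \leq \norm{\bA}\norm{\mathbf{e}_{k,j}} + \eta_{k,j} \to 0$ as $j \to \infty$, so the while-condition in line \ref{alg:looptermination_line} must eventually fail. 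This termination argument, and the bookkeeping of three independent perturbation sources against the single geometric scale $\eta_{k,j}$, is the technical heart of the proof.

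Once the inner loop stops at some $j_\star$, $\norm{\bA \mathbf{w}_{k,j_\star} - \bbf} \leq \norm{\mathbf{r}_{k,j_\star}} + \eta_{k,j_\star}$ together with $\norm{\bA^{-1}} \leq c_\bA$ yields $\norm{\bu - \mathbf{w}_{k,j_\star}} \leq \kappa_1 2^{-(k+1)}\varepsilon_0$. The final recompress--coarsen step in line \ref{alg:cddtwo_coarsen_line}, invoked with tolerances $\kappa_2 2^{-(k+1)}\varepsilon_0$ and $\kappa_3 2^{-(k+1)}\varepsilon_0$, adds by Theorem \ref{lmm:combined_coarsening} an error bounded by $(\kappa_2 + \kappa_3) 2^{-(k+1)}\varepsilon_0$ (absorbing the dimensional constants into the tolerances, or equivalently rescaling $\kappa_2, \kappa_3$ as in \cite{BD}). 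Combining with $\kappa_1 + \kappa_2 + \kappa_3 \leq 1$ closes the induction: $\norm{\bu - \bu_{k+1}} \leq 2^{-(k+1)}\varepsilon_0$. Finitely supported inputs with finite hierarchical ranks are preserved by $\apply$, $\rhs$, $\recompress$ and $\coarsen$, so $\bu_\varepsilon$ inherits both properties; the identification $\norm{u - u_\varepsilon}_{\spL{2}(\Omega)} = \norm{\bu - \bu_\varepsilon}$ is then the Parseval identity for the wavelet basis.
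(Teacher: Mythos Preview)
Your proposal is correct and follows essentially the same route as the paper: the outer induction on $k$, the inner error recursion $\mathbf{e}_{k,j+1} = \bC\,\mathbf{e}_{k,j} + (\text{perturbation of size }\lesssim \eta_{k,j})$, the use of \eqref{K} to handle the nonnormality of $\bC$, and the final $\kappa_1+\kappa_2+\kappa_3\leq 1$ bookkeeping all match. One minor remark: the error bounds you attribute to Theorem~\ref{lmm:combined_coarsening} are really just the defining accuracy guarantees $\norm{\hatPsvd{\eta}\bv-\bv}\leq\eta$ and $\norm{\hatCctr{\eta}\bv-\bv}\leq\eta$ of the operators themselves (so no ``fixed multiple'' is needed, and no dimensional constants need to be absorbed); Theorem~\ref{lmm:combined_coarsening} is invoked in the paper only for the later complexity estimates.
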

\begin{proof}
The argument is similar to that in \cite{BD} and differs only in the treatment of the inner loop
  between steps $7$ and $12$ in Algorithm \ref{alg:tensor_opeq_solve}.
  For convenience we briefly sketch the induction argument that shows that $\|\bu_k-\bu\|\le 2^{-k}\varepsilon_0$.
To that end, since by step 5,
$$
\norm{\bw_{k,j}-\bu} \le \norm{\bA^{-1}} \norm{\bA \bw_{k,j} - \bbf} \le c_\bA(\norm{\mathbf{r}_{k,j}} +\eta_{k,j}),
$$
condition \ref{alg:looptermination_line} ensures that when exiting the inner loop at step 12, the approximation $\bw_{k,j}$ satisfies $\|\bw_{k,j}-\bu \|\le \kappa_12^{-(k+1)}\varepsilon_0$. To see that $c_\bA(\|\mathbf{r}_{k,j}\| +\eta_{k,j})$ indeed becomes as small as one wishes when $j$ increases, one derives from
steps 5, 8, and the definition of $\eta_{k,j}$ in step 10, 
that the iterates $\bw_{k,j}$ satisfy a relation of the form
$$
   \bw_{k,j+1} = \bw_{k,j} - \omega \bA \bw_{k,j} + \omega \bbf + \mathbf{z}_{k,j}  \,,
$$
where $\|\mathbf{z}_{k,j} \|\le (\beta_1 + \beta_2 +\omega)\eta_{k,j} =: \varepsilon_{k,j}$.
Using $\bw_{k,j+1} -\bu = \bC (\bw_{k,j} - \bu) + \mathbf{z}_{k,j}$, we thus obtain, for $j >K$,
\begin{align*}
  \norm{\bw_{k,j} - \bu}  &\leq \norm{\bC^j} \norm{\bw_{k,0} - \bu} + \sum_{\ell = 0}^{j-1} \norm{\bC^{j-1-\ell}} \norm{\mathbf{z}_{k,\ell}}   \\
  & \leq {\tilde\rho}^{j} \norm{\bw_{k,0} - \bu}  + \sum_{\ell=0}^{j-1-K} {\tilde\rho}^{j-1-\ell} \norm{\mathbf{z}_{k,\ell}}  + B \sum_{\ell = j - K}^{j-1} \norm{\mathbf{z}_{k,\ell}} \,.
\end{align*}
Since  $\norm{\mathbf{z}_{k,\ell}} \leq \varepsilon_{k,\ell} \le (\beta_1 + \beta_2 +\omega) \rho^\ell 2^{-k}\varepsilon_0$ we
conclude that  for $\beta_3:= (\beta_1 + \beta_2 +\omega)$,
\begin{eqnarray}
\label{jlarge}
  \norm{\bw_{k,j} - \bu} &\leq &{\tilde\rho}^j \norm{\bu_k - \bu} + \big( (j-K) {\tilde\rho}^{j-1} \nonumber \\
&&\quad     + (1-\rho)^{-1} (\rho^{-K} - 1) B \rho^j\big)\beta_3   2^{-k}\varepsilon_0\nonumber \\
&\le &\big\{ {\tilde\rho}^j  + \big((j-K) {\tilde\rho}^{j-1} + (1-\rho)^{-1} (\rho^{-K} - 1) B \rho^j\big)\beta_3
\big\} 2^{-k}\varepsilon_0 \,.
\end{eqnarray}
On the other hand, observing that 
$$
\norm{\mathbf{r}_{k,j}}\le \norm{\bA\bw_{k,j} -\bbf}+\eta_{k,j} \le \norm{\bA} \norm{\bw_{k,j}-\bu}+ \eta_{k,j},
$$
we see that after at most a finite number $J$ of steps, depending only on $\bA$ (i.e.,  on the operator $A$ and the chosen wavelet basis), indeed $c_\bA(\norm{\mathbf{r}_{k,J}}+\eta_{k,J})
\le \kappa_1 2^{-(k+1)}\varepsilon_0$ holds, the inner loop terminates and hence $\norm{\bw_{k,J}-\bu} \le \kappa_1 2^{-(k+1)}\varepsilon_0$. For later reference, note that $J\leq I$ with
\begin{equation}\label{eq:Idef}
  I := \min\Bigl\{ j \geq K\colon c_\bA \Bigl( \norm{\bA} \bigl[ \tilde{\rho}^j + \bigl( (j-K) \tilde{\rho}^{j-1} + (1-\rho)^{-1} (\rho^{-K} -1) B \rho^j \bigr)\beta_3 \bigr] + 2\rho^{j+1}  \Bigr) \leq \frac{\kappa_1}2 \Bigr\} \,.
\end{equation}
Since $\kappa_1+\kappa_2+\kappa_3\le 1$, we obtain $\norm{ \bu_{k+1}-\bu } \le 2^{-(k+1)}\varepsilon$.
\end{proof}

\subsection{Operator Approximation}\label{ssec:opappr}

Our approximate application of the high-dimensional operator $\bA$ is based on the wavelet compressibility properties of the one-dimensional operators
\begin{equation}\label{eq:1dscmatdef} 
 {\mathbf{A}}^{(i)}_2 := \hatbS_i^{-2} \bT_2 \,,\quad 
  {\mathbf{A}}^{(i)}_3 := \hatbS_i^{-1} \bT_3 = -\hatbS_i^{-1}\bT_4   %
  \,,
\end{equation}
where the last relation holds because of \eqref{T2} and the boundary conditions.
More precisely, we make use of the following property: there exist an $s>0$ and $\mathbf{T}_{n,j}$, $j\in\N$, such that
for some fixed sequences of positive numbers $\beta( {\mathbf{A}}^{(i)}_n)\in\spl{1}$ for $n=2,3$,
\begin{equation}\label{eq:diff_scompr}
\norm{\hatbS_i^{-2} (\bT_{2} -  \bT_{2,j}) } \leq \beta_j( {\mathbf{A}}^{(i)}_2) \,2^{-sj} \,,  \quad 
 \norm{ \hatbS_i^{-1} (\bT_{3} -  \bT_{3,j}) } \leq \beta_j( {\mathbf{A}}^{(i)}_3)\, 2^{-sj} \,,
 \end{equation}
where   each $\bT_{n,j}$ has at most $\alpha_j({\mathbf{A}}^{(i)}_n)\, 2^j$ nonzero entries in each column, with $\alpha( {\mathbf{A}}^{(i)}_n)\in\spl{1}$
further fixed sequences of positive numbers. It is convenient to scale the sequences so that
$\norm{\beta(\bA^{(i)}_n)}_{\spl{1}} \le \norm{ \bA^{(i)}_n }$.

Note that this is slightly weaker than the usual definition of $s^*$-compressibility \cite{Cohen:01}, since we do not require a bound on the number of entries per row, and we shall refer to the property in \eqref{eq:diff_scompr} as \emph{column}-$s^*$-\emph{compressibility}. In addition, as in \cite{BD2} we assume the approximations to have the \emph{level decay property}, that is, there exists a $\gamma>0$ such that $\abs{\abs{\nu} - \abs{\mu}} > \gamma j$ implies $T_{n,j,\nu\mu} = 0$.

Our aim is to obtain $\bw_\eta$, satisfying certain representation complexity bounds, such that $\norm{\Pc\bT \bv - \bw_\eta} \leq \eta$. We make the ansatz $\bw_\eta = \Pc_n \tbT \bv$ where $\Pc_n$ is the finite rank approximation to the scaling operator $\Pc$ from
\eqref{Pn} and $\tilde\bT$ is a ``compressed'' version of $\bT$.  
Specifically, based on the estimate
\begin{align}
   \norm{\Pc\bT \bv - \Pc_n \tbT \bv} 
     &\leq  \norm{\Pc (\bT - \tbT) \bv} +  \norm{(\Pc - \Pc_n) \tbT \bv} \notag \\
     &\leq  (1+\delta) \norm{\Sc^{-2} (\bT - \tbT) \bv }  +  \norm{(\Pc - \Pc_n) \tbT \bv},  \label{eq:applyerr}
\end{align}
we first choose $\tbT= \tbT(\bv)$ depending on $\bv$ to obtain a suitable bound on the first term on the right hand side, and subsequently pick $n$ such that the second term is sufficiently small.

The construction of $\tbT$, based on the property \eqref{eq:diff_scompr}, can be done in complete analogy to \cite[Section 4.2]{BD2}. The resulting approximation is of the form
$$ 
  \tilde{\bT} = \tilde\bT_J := \sum_{\kk{n} \in \KK{{d}}(\kk{R})} c_{\kk{n}} \bigotimes_i \tilde{\bT}^{(i)}_{n_i} \,,
$$
where $\tilde{\bT}_1^{(1)} = \bT_1 = \id$ and for $n_i > 1$,
\begin{equation}
\label{tildeTni}
\tilde{\bT}^{(i)}_{n_i} = \tilde{\bT}^{(i,J)}_{n_i} := \sum_{p=0}^{J+1} {\bT}^{(i,J)}_{n_i,[p]} \Restr{\Lambda^{(i)}_{[p]} }
\end{equation}
with $\mathbf{T}^{(i)}_{n_i,[p]} :=   \mathbf{T}_{n_i,J-p}$, $p=0,\ldots,J$,   and $\mathbf{T}^{(i)}_{n_i,[J+1]} := 0$ as in \eqref{eq:diff_scompr}. 
Recall from Section \ref{sssec:1.2.2} that the operator $\Restr{\Lambda}$ retains the entries of a tensor supported in $\Lambda$ and replaces
all others by zero. The adaptive $\bv$-dependent formation of $\tbT$ hinges on the choice of the intex sets $\Lambda^{(i)}_{[p]}$, which
 are constructed from the supports $\bar{\Lambda}^{(i)}_j$ of the best $2^j$-term approximations of $\pi^{(i)}(\bv)$.
 Specifically, setting $\bar{\Lambda}^{(i)}_{-1} := \emptyset$, we recursively define
\begin{equation*}
  \Lambda^{(i)}_{[p]} := \bar{\Lambda}^{(i)}_p\setminus \bar{\Lambda}^{(i)}_{p-1} \,,\; p=0,\ldots,J,\quad
     \Lambda^{(i)}_{[J+1]} := \nabla \setminus \bar{\Lambda}^{(i)}_J\,,\quad \Lambda^{(i)}_{[p]}:=\emptyset \,,\; p> J+1\,.
\end{equation*}
Defining next   the a posteriori error indicator
\begin{multline}
\label{eJv}
e_J(\bv) :=  \sum_{i=1}^d C^{(i)}_\bA  \Bigl[  
     \sum_{p=0}^J \Bigl(\sum_{n=2}^R \beta_{J-p}(\bA^{(i)}_n)  \Bigr) 2^{-s(J-p)}   \norm{\Restr{\Lambda^{(i)}_{[p]}} \pi^{(i)}(\bv)} 
\\
  +  \sum_{n=2}^R \norm{{\bA}^{(i)}_n} %
\,  \norm{\Restr{\Lambda^{(i)}_{[J+1]}} \pi^{(i)}(\bv)} \Bigr] ,
\end{multline}
where
\begin{equation}
\label{eq:maxsequences-0}  
    C^{(i)}_\bA := \max \Bigl\{  \abs{a_{ii}}  ,2
      \sum_{j\neq i}  \norm{{\bA}^{(j)}_3} 
     \abs{a_{ij}} ,    \Bigr\} 
           \leq \max\bigl\{ 1,2 \max_{j\neq i} \norm{\bA^{(j)}_3} \bigr\} \abs{a_{ii}},
\end{equation}
one can follow the arguments in \cite[Lemma 6.10]{BD2}, now using \eqref{eq:diff_scompr}, to verify that
\begin{equation}
\label{upperbound}
\norm{\Sc^{-2}(\bT - \tilde{\bT}_J)\bv } \le e_J(\bv).
\end{equation}

The heart of Algorithm \ref{alg:tensor_opeq_solve} is the adaptive application of $\bA$.
We can now specify the corresponding routine $\apply(\bv;\eta)$ for a finitely supported input $\bv\in \ell_2(\nabla^d)$ and a prescribed
error tolerance $\eta >0$. The relevant properties are collected in the following theorem, which is a complete analog
to Theorem 6.8 in \cite{BD2}. 

Without loss of generality, for a given $\bv$ we shall employ tolerances $\eta \le \|\bS^{-2}\bT\|\|\bv\|$, since otherwise we may choose $\bw_\eta = 0$.
For such $\eta$, it will be convenient to define 
\begin{equation}
\label{zeta}
\zeta(\eta;\bv):= \frac{\eta}{3 \norm{\bS^{-2}\bT} \norm{\bv} }.
\end{equation}

\begin{thrm}
\label{thm:apply}
Given any $\bv\in \ell_2(\nabla^d)$ of finite support and finite hierarchical ranks as well as any $0<\eta \le \|\bS^{-2}\bT\|\|\bv\|$,  let $\bw_\eta$ be defined as follows: choose $J(\eta;\bv)$ as the minimal integer such that 
\begin{equation}
\label{Jcond}
(1+\delta) e_{J(\eta;\bv)}(\bv) \leq \frac\eta2,
\end{equation}
and set $\bw_\eta := \Pc_{m(\eta;\bv)} \tilde{\bT}_{J(\eta;\bv)} \bv$ where,
with $\tilde\Lambda := \bigtimes_{i=1}^d\displaystyle \supp_i (\tilde{\bT}_{J(\eta;\bv)} \bv)$,
\begin{equation}
\label{nsize}
  m(\eta;\bv) :=  \bigl\lceil{ h^{-1} \bigl( \abs{\ln(\zeta(\eta;\bv)} + \ln \max \{  (\om{\nu}/\omin)^2 \colon  \nu \in \tilde\Lambda \} \bigr) }\bigr\rceil \,. 
  \end{equation}
Then the following statements hold:
\begin{enumerate}[{\rm(i)}]
\item We have the estimates 
\begin{align} 
\label{eq:approx-eta}
\norm{\mathbf{A}\mathbf{v} - \mathbf{\bw_\eta}} & \leq \eta\,,   \\
   \#  \supp_i (\bw_\eta)  &\leq 
  \|\hat\alpha\|_{\ell_1}  \eta^{-\frac{1}{s}}
  \Big(4(2^{s}+2) R^{1+s} \sum_{i=1}^d C^{(i)}_\bA \max_{n>1} \norm{\bA^{(i)}_n}\,   \norm{\pi^{(i)}(\bv)}_{\As}
   \Big)^{\frac1s},
   \label{eq:tensor_apply_support} 
\end{align}
where $\hat\alpha :=(\hat\alpha_k)_{k\in\N}$ and $\hat\alpha_k := \max_{i\in\{1,\ldots,d\}} \max_{n>1} \alpha_k({\bA}^{(i)}_n)$.
\item
The outputs of $\apply$ are sparsity-stable in the sense that for $ i\in\{1,\ldots,d\}$,
\begin{equation}
\label{sparsity-stable}
\norm{\pi^{(i)}( \bw_\eta )}_\As \leq \Bigl( \check{C}^{(i)}_\bA  + \frac{2^{3s+2}}{2^s - 1} \norm{\hat{\alpha}}_{\spl{1}}^s 
\max_{n > 1} \norm{\bA^{(i)}_n}\, 
C^{(i)}_\bA \Bigr) R^s (1+\delta) \,  \norm{\pi^{(i)}(\bv)}_\As \,,
\end{equation}
 where $C^{(i)}_\bA$ is defined in \eqref{eq:maxsequences-0} and 
\begin{equation}
\label{checkAi-bound}
\check{C}^{(i)}_\bA  :=  12\, (d-1) \max_{j\neq i} \abs{a_{jj}}\,  \bigl(\max_{i,n_i} \norm{\bA^{(i)}_{n_i}}\bigr)^2
 \,.
\end{equation}
\item
Denoting by $R_\alpha$ the hierarchical ranks in the representation of $\bT$,
the hierarchical ranks of  $\bw_\eta$ can be bounded by
\begin{equation}
 {\rank_\alpha (\bw_\eta  )} \leq \hat m(\eta;\bv) R_\alpha \rank_\alpha(\mathbf{v}),\quad \alpha \in \cD_d \,,
   \label{eq:tensor_apply_ranks}
\end{equation}
 where for $n^+ = n^+(\delta)$ from in Section \ref{ssec:precond} and $m(\eta;\bv)$ defined in \eqref{nsize},
\begin{equation}
\label{scrankstotal}
\hat m(\eta;\bv) := 1 +n^+ + m(\eta;\bv). 
\end{equation}
 \item
The number ${\ops}(\bw_\eta)$ of floating point operations required to compute $\bw_\eta$ in the 
hierarchical Tucker format for a given $\bv$ with ranks $\rank_\alpha(\bv)= r_\alpha$, $\alpha \in \cD_d\setminus \{0_d\}$, and $r_{\hroot{d}}=1$, scales like
\begin{multline}
\label{eq:flops}
{\ops}(\bw_\eta)\lesssim  \sum_{\alpha\in \Ncal(\hdimtree{d})}  \bigl(\hat m(\eta;\bv)\bigr)^3 R_\alpha r_\alpha \prod_{q=1}^2
R_{c_q(\alpha)}r_{c_q(\alpha)} \\
 +  \eta^{-1/s} \sum_{i=1}^d \|\hat\alpha\|_{\ell_1}  \hat m(\eta;\bv) R r_i \Big(\sum_{j=1}^d C^{(j)}_\bA R\|\pi^{(j)}(\bv)\|_{\As}\Big)^{1/s},
\end{multline}
where the constant is independent of $\eta, \bv$, and $d$.
\item
Assume in addition that the approximations $\bT_{n,j}$ have the level decay property. With the notation $L(\bv):= \max\{ \abs{\nu_i} \colon \nu_i\in\supp_i( \bv),\,i=1,\ldots,d \}$, the scaling ranks $\hat m(\eta;\bv)$, defined in \eqref{scrankstotal}, can be bounded by
\begin{equation}
\label{metav}
\hat m(\eta;\bv) \le C(\delta, s,\bA)\,\Big[1 + L(\bv)  +  \abs{\ln \eta } +   \ln \Big(\sum_{i=1}^d
 \norm{\pi^{(i)}(\bv)}_\As\Big) \Big].
\end{equation}
\end{enumerate}
\end{thrm}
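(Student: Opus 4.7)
The proof plan closely mirrors the treatment of the analogous routine in \cite{BD2}; the main new ingredient is to handle the finite-rank approximation $\Pc_m$ of the asymmetric preconditioner $\Pc$, which replaces the symmetric scaling step $\bS^{-1}(\cdot)\bS^{-1}$ used there. The starting point is the splitting \eqref{eq:applyerr}. For the first summand, the choice \eqref{Jcond} of $J = J(\eta;\bv)$ together with \eqref{upperbound} immediately gives $(1+\delta)\|\Sc^{-2}(\bT - \tilde\bT_J)\bv\| \le \eta/2$. For the second summand, observe that $\tbT_J\bv$ is supported in $\tilde\Lambda$, so
$$
\|(\Pc - \Pc_m)\tbT_J\bv\| \le \|(\Pc - \Pc_m)\bS^{2}\Restr{\tilde\Lambda}\|\,\|\bS^{-2}\tbT_J\bv\|.
$$
By Theorem \ref{thm:expsum_relerr} and the definition of $\Pc, \Pc_m$ from \eqref{Pn}, the choice \eqref{nsize} of $m$ with $T := \max\{(\om{\nu}/\omin)^2 : \nu\in\tilde\Lambda\}$ forces the first factor to be at most $\zeta(\eta;\bv)$. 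A triangle inequality using the case assumption $\eta\le\|\Sc^{-2}\bT\|\,\|\bv\|$ and \eqref{Jcond} then yields $\|\bS^{-2}\tbT_J\bv\|\le (1 + \tfrac12(1+\delta)^{-1})\|\Sc^{-2}\bT\|\,\|\bv\|$, so the second summand is bounded by $\eta/2$ as well, establishing \eqref{eq:approx-eta}.

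The two estimates \eqref{eq:tensor_apply_support} and \eqref{sparsity-stable} are purely properties of $\tbT_J\bv$, because $\Pc_m$ is diagonal and hence preserves both $\supp_i$ and $\pi^{(i)}$ up to inclusion. The argument is then structurally identical to that for \cite[Thm.~6.8]{BD2}: one exploits the column-$s^*$-compressibility \eqref{eq:diff_scompr} together with the splitting $\tbT^{(i,J)}_{n_i}=\sum_{p=0}^{J+1}\bT^{(i)}_{n_i,[p]}\Restr{\Lambda^{(i)}_{[p]}}$ from \eqref{tildeTni}. Specifically, each block contributes $\alpha_{J-p}(\bA^{(i)}_n)2^{J-p}\cdot\#\Lambda^{(i)}_{[p]}$ to $\#\supp_i$, where $\Lambda^{(i)}_{[p]}$ is chosen from best $N$-term approximations of $\pi^{(i)}(\bv)$; summing over $p$, using the $\spl{1}$-summability of $\hat\alpha$ and the minimality of $J$ implicit in \eqref{Jcond}, produces the factor $\eta^{-1/s}$ and the $\As$-norm factor. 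The same block decomposition and standard $\As$-stability of best $N$-term approximation yield \eqref{sparsity-stable}; the constant $\check C^{(i)}_\bA$ absorbs the extra contribution coming from the off-diagonal terms in \eqref{eq:example_operator}.

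For the rank bound \eqref{eq:tensor_apply_ranks}, the key observation is that $\Pc_m$ is a diagonal operator whose symbol $p_{n,\nu}$, by \eqref{eq:expapprox_sumdefs} and $\omega_\nu^2 = \sum_i (\omi{i}{\nu_i})^2$, splits as a sum of $1+n^+ + m(\eta;\bv) = \hat m(\eta;\bv)$ separable terms in the $d$ mode directions. Hence $\Pc_m$ has hierarchical ranks exactly $\hat m(\eta;\bv)$ in every node, and the composition $\Pc_m \tbT_J$ has hierarchical ranks bounded by $\hat m(\eta;\bv)\,R_\alpha$; applied to $\bv$ of ranks $r_\alpha$ this yields the stated bound. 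For the complexity estimate \eqref{eq:flops}, one then inserts these rank bounds into the standard cost formulas for hierarchical Tucker arithmetic (cf.\ \cite{BD,Grasedyck:10}): the first sum accounts for the transfer-tensor contractions ($\hat m^3 R_\alpha r_\alpha \prod_q R_{c_q}r_{c_q}$ operations per interior node), while the second captures the cost of applying the one-dimensional operators $\tilde\bT^{(i)}_{n_i}$ to each mode frame, whose supports are controlled by \eqref{eq:tensor_apply_support}.

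The most delicate step is (v), which I expect to be the main obstacle since it requires controlling two distinct quantities simultaneously. First, the level decay property of $\bT_{n,j}$ implies $\max\{|\nu_i| : \nu_i\in\supp_i(\tbT_J\bv)\} \le L(\bv) + \gamma J$, giving $\ln\max\{(\om{\nu}/\omin)^2:\nu\in\tilde\Lambda\} \lesssim L(\bv) + J + 1$ via \eqref{omiscale}. Second, one must bound $J(\eta;\bv)$ itself: by the definition of $e_J$ in \eqref{eJv} and the $\spl{1}$-summability of $\beta(\bA^{(i)}_n)$, one has $e_J(\bv) \lesssim 2^{-sJ}\sum_i C^{(i)}_\bA \max_n\|\bA^{(i)}_n\|\,\|\pi^{(i)}(\bv)\|_{\As}$ up to lower-order tail contributions handled as in \cite{BD2}, so the minimality of $J$ in \eqref{Jcond} gives $J \lesssim 1 + |\ln\eta| + \ln\bigl(\sum_i \|\pi^{(i)}(\bv)\|_\As\bigr)$. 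Combining this with $|\ln\zeta(\eta;\bv)| \lesssim 1 + |\ln\eta| + |\ln\|\bv\||$ from \eqref{zeta}, and observing that $\|\bv\|\le \sum_i \|\pi^{(i)}(\bv)\|_\As$, delivers \eqref{metav} after absorbing $\delta$- and $\bA$-dependent constants into $C(\delta,s,\bA)$.
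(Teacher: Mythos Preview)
Your proposal is correct and follows essentially the same route as the paper: the paper also starts from the splitting \eqref{eq:applyerr}, bounds $\norm{(\Pc-\Pc_m)\tbT\bv}$ via $\norm{(\Pc-\Pc_m)\Sc^2\Restr{\tilde\Lambda}}\,\norm{\Sc^{-2}\tbT\bv}$ with $\norm{\Sc^{-2}\tbT\bv}\le e_J(\bv)+\norm{\Sc^{-2}\bT}\norm{\bv}$, and otherwise defers to \cite[Theorem 6.8]{BD2}, noting only that the one-sided scaling yields the improved factor $(1+\delta)$ in \eqref{sparsity-stable} and the reduced exponents of $\hat m$ in \eqref{eq:tensor_apply_ranks}, \eqref{eq:flops}. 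One minor imprecision: $\Pc_m$ does not literally ``preserve $\pi^{(i)}$'' but satisfies $\pi^{(i)}(\Pc_m\bw)\le (1+\delta)\,\pi^{(i)}(\Sc^{-2}\bw)$ entrywise, which is what produces the factor $(1+\delta)$ in \eqref{sparsity-stable}; with this adjustment your argument matches the paper's.
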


Comparing the above statements with Theorem 6.8 in \cite{BD2} reveils several minor differences. This concerns, for instance,
the constants in \eqref{eq:tensor_apply_support}, with the condition \eqref{Jcond}
is slightly relaxed here, and the somewhat less involved definition of $m(\eta;\bv)$ in \eqref{nsize} due to the one-sided
application of the scaling operator. The main difference lies in the rank bounds \eqref{eq:tensor_apply_ranks} and in the 
bound on the number of operations \eqref{eq:flops}, where $\hat m(\eta;\bv)$ enters with half the exponent of \cite[Theorem 6.8]{BD2}.

The proof of Theorem \ref{thm:apply} differs from the proof of Theorem 6.8 in \cite{BD2} only in minor technical details. %
In fact, the one-sided scaling simplifies some of the arguments. We therefore give some brief comments and omit a complete proof.

First, with $\tilde\Lambda$ as in Theorem \ref{thm:apply}, one has
\[  \norm{(\Pc - \Pc_{m(\eta;\bv)}) \tbT \bv}  \leq 
   \norm{(\Pc - \Pc_{m(\eta;\bv)}) \Sc^2 \Restr{\tilde\Lambda}  } \bigl(  e_{J(\eta;\bv)}  
      +  \norm{\Sc^{-1} \bT}  \norm{\bv}  \bigr)  \,.
   \]
Combining this with \eqref{eq:applyerr}, \eqref{upperbound}, and \eqref{nsize} yields
$$
\norm{ \bA \bv - \bw_\eta }  \le (1+\delta)e_{J(\eta;\bv)}(\bv) + \zeta(\eta;\bv)\big(e_{J(\eta;\bv)}(\bv)+ \|\bS^{-2}\bT\|\|\bv\|\big).
$$
In view of \eqref{Jcond}, $\eta \leq  \|\bS^{-2}\bT\|\|\bv\|$, and \eqref{zeta}, this confirms \eqref{eq:approx-eta}.
The argument for \eqref{eq:tensor_apply_support} is the same as in \cite{BD2}. The slightly different constant results from the 
relaxed requirement \eqref{Jcond} on $J(\eta;\bv)$. The appearance of the factor $(1+\delta)$ in \eqref{sparsity-stable}
instead of $(1+\delta)^2$ in \cite[Theorem 6.8]{BD2} results again from the one-sided scaling, which also leads to the more favorable exponents
in \eqref{eq:tensor_apply_ranks} and \eqref{eq:flops}.

\subsection{Complexity Estimates}\label{sec:complexity}
We have seen that Algorithm \ref{alg:tensor_opeq_solve} converges without any specific
assumptions on the solution in the sense that a given target accuracy is reached 
after finitely many steps. We will show next that, under canonical assumptions
on the problem data ($\bA, \bbf$), whenever the solution has certain sparsity properties
(regarding low-rank approximability and representations sparsity of the tensor factors), 
the approximate solution produced by Algorithm \ref{alg:tensor_opeq_solve} has
similar and in a sense near-optimal sparsity properties. We proceed now formulating
our data assumptions as well as the envisaged {\em benchmark assumptions} concerning
the solution. We stress, however, that these assumptions are {\em not} explicitly used by the algorithm,
but rather exploited automatically.

From the results in \cite{BD2} and Theorem \ref{thm:S2T}, we know that the infinite matrices
$\bS^{-2}\bT$ and $\bS^{-1}\bT\bS^{-1}$ are automorphisms of $\ell_2(\nabla^d)$.
In particular, $\hatbS_i^{-2}\bT_2$ and $\hatbS_i^{-1}\bT_2\hatbS_i^{-1}$ are bounded mappings on $\ell_2(\nabla)$.
  This latter fact can be interpreted as follows. Let $\ell_2^t(\nabla)$ denote the {\em weighted}
space $\{ \bw\in \R^{\nabla}: \| \hatbS^t\bw\| <\infty\}$, which defines a scale of interpolation spaces.
Then, the boundedness of $\hatbS_i^{-1}\bT_2\hatbS_i^{-1}$
means that $\hatbS_i^{-2}\bT_2 : \ell_2^1(\nabla)\to \ell_2^1(\nabla)$ is bounded. By interpolation,
$\hatbS_i^{-2}\bT_2 : \ell_2^t(\nabla)\to \ell_2^t(\nabla)$  is bounded for $t\in [0,1]$. This, in turn, means
that $\hatbS^{t-2}_i\bT_2\hatbS^{-t}_i$ is bounded for $t\in [0,1]$, and by the same argument we obtain also that
$\hatbS^{t-1}_i\bT_3\hatbS^{-t}_i$ is bounded.
Hence, for $t\in [0,1]$,
\begin{equation}
\label{Sobstab}
\norm{\hatbS^{t}_i\bA^{(i)}_2\hatbS^{-t}_i},\; \norm{\hatbS^{t}_i \bA^{(i)}_3 \hatbS^{-t}_i } < \infty
\quad \text{as well as} \quad \norm{\Sc^t \bbf} \leq (1+\delta) \norm{\Sc^{t-2} \bg} < \infty .
\end{equation}
The \emph{excess regularity} assumption made in \cite{BD2} corresponds to the statement that \eqref{Sobstab} holds for \emph{some} $t>0$, which there indeed had to be assumed. As shown by the above considerations, however, this is in our present setting automatically satisfied for $t=1$.

We now formulate our data assumptions.

\begin{assumptions}
\label{ass:A-f}
Concerning the scaled matrix representation $\bA$ and the right hand side $\bbf$ we require the following properties for some fixed $s^* > 0$:
\begin{enumerate}[{\rm(i)}]
 \item The lower-dimensional component operators $\bA^{(i)}_{n_i}$,  defined in \eqref{eq:1dscmatdef}, are column-$s^*$-compressible with the level decay property (cf.\ Section \ref{ssec:opappr}). %
 \item The number of operations required for evaluating each entry in the approximations $\bT_{n,j}$ as in \eqref{eq:diff_scompr} is uniformly bounded.
\item We have an estimate $c_\bA \geq \norm{\bA^{-1}}$, and the initial error estimate $\varepsilon_0$ overestimates the true value of $\norm{\bA^{-1}}\norm{\bbf}$ only up to some absolute multiplicative constant, i.e., $\varepsilon_0  \lesssim  \norm{\bA^{-1}}\norm{\bbf}$.
\item
The contractions of $\bbf$ are compressible, i.e., $\pi^{(i)}(\bbf)\in \As$, $i=1,\ldots,d$, for any $s$ with $0 < s
 <s^*$.
\end{enumerate}
\end{assumptions}

The concrete realization of the routine $\rhs$ depends on the concrete way the right hand side is given. 
For details on possible constructions of $\rhs$, we refer to \cite[Appendix B]{BD2}, which justifies the following
assumptions made in subsequent complexity statements.

\begin{assumptions}
\label{ass:rhs}
The procedure $\rhs$ is assumed to have  the following properties:
\begin{enumerate}[{\rm(i)}]
\setcounter{enumi}{4}
 \item \label{ass:rhsapprox}There exists an approximation $\bbf_\eta := \rhs(\eta)$ such that $\| \mathbf{f}-\rhs(\eta)\|\leq \eta$ and
  \begin{gather*}
  \norm{\pi^{(i)}(\bbf_\eta)}_\As \leq C^\text{{\rm sparse}} \norm{\pi^{(i)}(\bbf)}_{\As}    \,,  \quad   \norm{\Sc_i \bbf_\eta} \leq C^\text{{\rm reg}} \norm{\Sc_i \bbf} \,, \\
   \sum_i \#\supp_i(\bbf_\eta) \leq C^{\text{{\rm supp}}} \,d \,\eta^{-\frac1s}\, \Bigl(\sum_i \norm{\pi^{(i)}(\bbf)}_{\As}\Bigr)^{\frac1s}, \\  \abs{\rank(\bbf_\eta)}_\infty \leq   C_\bbf^{\text{{\rm rank}}}\, \abs{\ln \eta}^{b_\bbf} \,,
  \end{gather*}
hold,  where $C^\text{{\rm sparse}},C^{\text{{\rm supp}}}, C^{\text{{\rm reg}}}, C_\bbf^{\text{{\rm rank}}} > 0$,  $b_\bbf \geq 1$ are  independent of $\eta$, and $C^\text{{\rm sparse}}$, $C^{\text{{\rm reg}}}$, $C^{\text{{\rm supp}}}$ are independent of $\bbf$.
 \item \label{ass:rhsops}The number of operations required for evaluating $\rhs(\eta)$ is bounded, with a constant $C^\text{{\rm ops}}_\bbf(d)$, by
  $\ops(\bbf_\eta) \leq C^\text{{\rm ops}}_\bbf(d) \bigl[ \abs{\ln\eta}^{3b_\bbf} 
    + \abs{\ln \eta}^{b_\bbf}  \eta^{-\frac1s} \bigr]  $. 
\end{enumerate}
\end{assumptions}

Next we explain the benchmark properties of the solution to which subsequent complexity statements refer. These properties
are {\em not} used by the solver.
 
\begin{assumptions}\label{ass:approximability}
Concerning the approximability of the solution $\bu$, we   assume:
\begin{enumerate}[{\rm(i)}]
\setcounter{enumi}{6}
\item \label{ass:uapprox}$\bu \in \AH{\ga_\bu}$ with $\ga_\bu(n) = e^{d_\bu n^{1/b_\bu}}$
for some $d_\bu>0$, $b_\bu \geq 1$.
\item $\pi^{(i)}(\bu) \in \As$ for $i=1,\ldots,d$, for any $s$ with $0 < s
 <s^*$.
\end{enumerate}
\end{assumptions}

When discussing tractability issues in the sense of complexity theory it is important to know how the data
behave with respect to the spatial dimension $d$.

\begin{assumptions}\label{ass:dim}
In our comparison of problems for different values of $d$, we assume:
\begin{enumerate}[{\rm(i)}]
\setcounter{enumi}{8}
 \item The following constants are independent of $d$: $d_\bu$, $b_\bu$, $C^\text{{\rm sparse}}$, $C^{\text{{\rm supp}}}$, $C^{\text{{\rm reg}}}$, $C_\bbf^{\text{{\rm rank}}}$.
  \item The following quantities remain bounded independently of $d$: $\norm{\bA}$, $\norm{\bA^{-1}}$; the maximum hierarchical representation rank $\max_\alpha R_\alpha$ of $\bT$;
  the quantities $\norm{\pi^{(i)}(\bu) }_\As$ in the benchmark assumptions, $\norm{\pi^{(i)}(\bbf)}_{\As}$ in Assumptions \ref{ass:approximability}\eqref{ass:rhsapprox}, each for $i=1,\ldots,d$.
\item  In addition, we assume that  $C^\text{{\rm ops}}_\bbf(d)$ as in Assumptions \ref{ass:approximability}\eqref{ass:rhsops} grows at most polynomially as $d\to \infty$.
\item There exists a choice of $\tilde{\rho}$ in \eqref{K} independent of $d$ such that the corresponding values $K(\tilde{\rho})$, $B(\tilde{\rho})$ are bounded independently of $d$ as well.
 \end{enumerate}
 \end{assumptions}
 
Concerning the assumptions on $\norm{\bA}$, $\norm{\bA^{-1}}$, see Theorem \ref{thm:S2T} in Section \ref{ssec:ddep}. 
Concerning (xii), we know from the discussion in Section \ref{ssec:convan} that the existence of $K, B$ as in \ref{K} is ensured for $\tilde{\rho} > \rho$. Since the values of corresponding $K$ and $B$ are not explicitly quantified, however, the a priori bound on the number of steps \eqref{eq:Idef} serves only for theoretical purposes, and we have to rely on an a posteriori condition on the approximate residual for controlling the iteration. The concrete resulting values of $K$ and $B$ may depend on the choice of basis functions. As our numerical examples demonstrate, these values do not have any significant influence in practice.

The main result of this paper reads as follows. 
 
\begin{thrm}
\label{thm:complexity}
Suppose that  Assumptions \ref{ass:A-f}, \ref{ass:rhs} hold and that Assumptions \ref{ass:approximability} are valid for the solution $\bu$  of $\mathbf{A}\bu = \mathbf{f}$.
Let $\alpha > 0$ and let $\constsvd, \constcrs$ be as in Theorem
\ref{lmm:combined_coarsening}.
Let the constants $\kappa_1,\kappa_2,\kappa_3$ in
Algorithm \ref{alg:tensor_opeq_solve} be chosen as
\begin{gather*}
  \kappa_1 = \bigl(1 + (1+\alpha)(\constsvd + \constcrs +
  \constsvd\constcrs)\bigr)^{-1}\,, \\
  \kappa_2 = (1+\alpha)\constsvd \kappa_1\,,\qquad 
  \kappa_3 = \constcrs(\constsvd + 1)(1+\alpha)\kappa_1 \,,
\end{gather*}
and let $\beta_1 \geq 0$, $\beta_2 >0$ be arbitrary but fixed.
Then the approximate solution $\bu_\varepsilon$ produced by Algorithm \ref{alg:tensor_opeq_solve} for $\varepsilon < \varepsilon_0$
satisfies
\begin{gather}
 \label{eq:complexity_rank} 
 \abs{\rank(\bu_\varepsilon)}_\infty 
   \leq \, \bigl( d_\mathbf{\bu}^{-1}
   \ln\bigl[ 2(\alpha \kappa_1)^{-1} \rho_{\ga_\bu}
   \,\norm{\bu}_{\AH{\ga_\mathbf{\bu}}}\,\varepsilon^{-1}\bigr] \bigr)^{ b_\mathbf{\bu}}
    \lesssim (\abs{\ln \varepsilon} + \ln d)^{b_\bu} \,,
   \\
 \label{eq:complexity_supp} \sum_{i=1}^d \#\supp_i(\bu_\varepsilon) \lesssim
     d^{1 + s^{-1}} \, \Bigl(\sum_{i=1}^d \norm{ \pi^{(i)}(\bu)}_{\As} \Bigr)^{\frac{1}{s}}
           \varepsilon^{-\frac{1}{s}} \,,
\end{gather}
as well as
\begin{gather}
  \label{eq:complexity_ranknorm} 
  \norm{\bu_\varepsilon}_{\AH{\ga_\mathbf{\bu}}}
  \lesssim \sqrt{d}\,
      \norm{\bu}_{\AH{\ga_\mathbf{\bu}}}    \,,   \\
  \label{eq:complexity_sparsitynorm} \sum_{i=1}^d \norm{
  \pi^{(i)}(\bu_\varepsilon)}_{\As} \lesssim d^{1 + \max\{1,s\}} 
      \sum_{i=1}^d \norm{ \pi^{(i)}(\bu)}_{\As}  \,.
\end{gather}
The multiplicative constant in \eqref{eq:complexity_ranknorm} depends only on $\alpha$,
those in \eqref{eq:complexity_supp} and \eqref{eq:complexity_sparsitynorm} depend only on
$\alpha$ and $s$.

If in addition, Assumptions \ref{ass:dim} hold, then for the number of required operations $\ops(\bu_\varepsilon)$, we have the estimate
\begin{equation} 
\label{eq:complexity_totalops}
\ops(\bu_\varepsilon) \leq Cd^a \,d^{c s^{-1} \ln d} d^{12 c \ln \ln d}
   \abs{\ln \varepsilon}^{2 c \ln d + 2 \max\{b_\bu, b_\mathbf{f}\}}\,
       \varepsilon^{-\frac{1}{s}} \,,
\end{equation}
where $C,a$ are constants independent of $\varepsilon$ and $d$, and $c$ is the smallest $d$-independent value such that $I \leq c \ln d$ for $I$ as in \eqref{eq:Idef}. In particular,  $c$   does not depend on $\varepsilon$ and $s$.
\end{thrm}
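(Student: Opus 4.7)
The plan is to follow the blueprint of \cite[Theorem 7.2]{BD2}, adapted to the asymmetric preconditioner and exploiting the improved exponents provided by Theorem \ref{thm:apply}. The argument splits naturally into two parts: the structural bounds \eqref{eq:complexity_rank}--\eqref{eq:complexity_sparsitynorm} are obtained by carefully tracking the approximation classes of the iterates through the algorithm, while the operation count \eqref{eq:complexity_totalops} requires summing the per-step costs.

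For the structural bounds, I would induct on the outer index $k$, proving alongside Proposition \ref{prop:converge} that $\norm{\bu_k}_{\AH{\ga_\bu}} \lesssim \sqrt{d}\,\norm{\bu}_{\AH{\ga_\bu}}$ and $\norm{\pi^{(i)}(\bu_k)}_{\As} \lesssim d^{\max\{1,s\}}\norm{\pi^{(i)}(\bu)}_{\As}$. The constants $\kappa_1,\kappa_2,\kappa_3$ are tuned precisely so that the final $\recompress$/$\coarsen$ step of each outer iteration (line \ref{alg:cddtwo_coarsen_line}) matches the hypothesis of Theorem \ref{lmm:combined_coarsening} at accuracy $\kappa_1 2^{-(k+1)}\varepsilon_0$. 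Then \eqref{eq:combinedcoarsen_rankest} with $\ga=\ga_\bu(n)=e^{d_\bu n^{1/b_\bu}}$ gives \eqref{eq:complexity_rank} and \eqref{eq:complexity_ranknorm}, while \eqref{eq:combinedcoarsen_suppest} gives \eqref{eq:complexity_supp} and \eqref{eq:complexity_sparsitynorm}. Closing the induction requires that the input $\bw_{k,J}$ to this final step inherit the approximability of $\bu$ with $d$-independent multiplicative constants. This is ensured by propagating $\norm{\cdot}_{\AH{\ga_\bu}}$ and $\norm{\pi^{(i)}(\cdot)}_{\As}$ through the inner loop using the sparsity-stability \eqref{sparsity-stable} and the rank bound \eqref{eq:tensor_apply_ranks} of $\apply$ together with the analogous stability of $\recompress$/$\coarsen$; since by \eqref{eq:Idef} and Assumptions \ref{ass:dim}(xii) the number of inner steps satisfies $J\le I\lesssim \ln d$, the compounded constants remain under control.

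For the operation count, I would sum the costs from Theorem \ref{thm:apply}(iv), Remark \ref{rem:CRcomplexity} and Assumptions \ref{ass:rhs}(vi) over both loops. The per-step cost is dominated by $\hat m(\eta_{k,j};\bw_{k,j})^3$ times polynomial expressions in ranks and representation sizes, plus an $\eta_{k,j}^{-1/s}$ contribution. Using \eqref{metav} to bound $\hat m$ logarithmically in $L(\bw_{k,j})$, $\abs{\ln\eta_{k,j}}$ and the sparsity norms, and estimating $L(\bw_{k,j})\lesssim \ln(d\cdot\#\supp(\bw_{k,j}))$ via \eqref{eq:tensor_apply_support}, a geometric sum in $k$ produces the leading $\varepsilon^{-1/s}$ factor.

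The main obstacle is isolating exactly the prefactor $d^{cs^{-1}\ln d}\,d^{12c\ln\ln d}$ in \eqref{eq:complexity_totalops}. It arises from the interaction of three effects: the $\hat m^3$ scaling per step, the logarithmic growth of $\hat m$ with sparsity norms that themselves grow polynomially in $d$, and the $I\lesssim c\ln d$ iterations of the inner loop. Unrolling \eqref{metav} into \eqref{eq:flops} and propagating through $I$ inner iterations yields a product of $\mathcal{O}(\ln d)$ factors, each contributing a logarithmic $d$-dependence, whose product realises the stated $d^{c s^{-1}\ln d}$ behaviour; the $d^{12 c \ln\ln d}$ accounts for the cubic dependence on $\hat m$ inside this chain. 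Once this bookkeeping is done, the remaining steps---the geometric summation in $k$ and the incorporation of the $\rhs$ cost---follow \cite[Section 7]{BD2} essentially verbatim, with the important improvement that the one-sided scaling reduces the exponent on $\hat m$ in \eqref{eq:flops} relative to the symmetric variant, which is what propagates to the more favourable coefficient $12$ (rather than its symmetric-case analog) in the $d^{12 c\ln\ln d}$ factor above.
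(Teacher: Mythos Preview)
Your overall architecture---following \cite[Theorem 7.2]{BD2}, splitting into structural bounds via Theorem \ref{lmm:combined_coarsening} and operation counts via Theorem \ref{thm:apply}(iv), with propagation of approximation-class norms through at most $I\lesssim c\ln d$ inner steps---matches the paper's approach, which likewise defers to \cite{BD2} with Theorem \ref{thm:apply} substituting for \cite[Theorem 6.8]{BD2}.

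There is, however, a genuine gap in your treatment of the maximum wavelet level $L(\bw_{k,j})$. You propose to bound it as $L(\bw_{k,j})\lesssim \ln\bigl(d\cdot\#\supp(\bw_{k,j})\bigr)$ via \eqref{eq:tensor_apply_support}, but support cardinality does not control the maximum level: a sequence with a single nonzero entry can sit at an arbitrarily high level, and neither $\apply$ nor $\coarsen$ enforces any a priori level cap. The level decay property only limits how far one application of $\tilde{\bT}_J$ can spread support, and $J$ itself depends on the tolerance, so without additional input the levels could drift unboundedly across iterations.

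The mechanism the paper actually invokes is the excess regularity \eqref{Sobstab}: one propagates a uniform bound on $\norm{\Sc^{t}\bw_{k,j}}$ (for some $t>0$, here available up to $t=1$) through the iteration, using that $\apply$ and $\rhs$ preserve such bounds by \eqref{Sobstab}. Since $\omega_\nu\gtrsim 2^{\abs{\nu_i}}$, a bound on $\norm{\Sc^{t}\bw_{k,j}}$ forces entries at level $\ell$ to be $O(2^{-t\ell})$; after the $\coarsen$ step with tolerance $\gtrsim\eta_{k,j}$, only entries of size $\gtrsim\eta_{k,j}$ survive, whence $L(\bw_{k,j})\lesssim t^{-1}\abs{\ln\eta_{k,j}}$. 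This is precisely the content of \cite[Sections 6.4, 6.5]{BD2} to which the paper refers, and it is what feeds into \eqref{metav} to keep $\hat m$ logarithmic in $\varepsilon^{-1}$ and polynomial in $d$. Once you insert this argument in place of your support-based bound, the remainder of your bookkeeping (including the identification of the $d^{12c\ln\ln d}$ factor from the cubic $\hat m$-dependence in \eqref{eq:flops}) goes through as you outline.
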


As in \cite{BD2}, the proof of Theorem \ref{thm:complexity} has two main constituents. On the one hand, one can use Theorem \ref{thm:apply}
in complete analogy to the use of Theorem 6.8 in \cite{BD2}. On the other hand, one has to control $L(\bv)$ in \eqref{metav}.
On account of \eqref{Sobstab}, this can be done exactly as in \cite[Sections 6.4, 6.5]{BD2}. 

While the theoretical bounds have the same structure as for the scheme in \cite{BD2}, the concrete values of the constants are different and in fact more favorable (mainly due to the smaller exponents in \eqref{eq:tensor_apply_ranks} and \eqref{eq:flops}), as shown also by the numerical experiments discussed in the next section.

\section{Numerical Realization}\label{sec:numre}

\subsection{Approximate Application of Operators}

We now describe some practical improvements for the approximate application of operators in low-rank form required in Algorithm \ref{alg:tensor_opeq_solve}. Recall that for given compactly supported $\bv$ and tolerance $\eta>0$, we determine a suitable approximation $\tbT$ of $\bT$ as well as an $n$ such that $\norm{\Pc\bT \bv - \Pc_n \tbT \bv} \leq \eta$.

For our complexity estimates, we have assumed the choice of the parameter $n$ to be based directly on Theorem \ref{thm:expsum_relerr}. This choice depends only on $\eta$ and on the maximum wavelet level in the support of $\bv$, that is, on $\max_{\nu\in\supp \bv} \max_i \abs{\nu_i} $. We may, however, use the estimates in Theorem \ref{thm:expsum_relerr} in a slightly different way to take the actual values of $\bv$ into account, and hence make use of additional a posteriori information.

According to \eqref{eq:applyerr} we first choose, independently of $n$, a suitable $\tbT$ such that
$(1+\delta) \norm{\Sc^{-2} (\bT - \tbT) \bv } \leq \frac\eta2$. It then remains to pick $n$ such that $\norm{(\Pc - \Pc_n) \tbT \bv} \leq \frac\eta2$; here we can simply take into account the concrete values of $ \tbT \bv$ by noting that
\[
  \norm{(\Pc - \Pc_n) \tbT \bv} \leq \max_{\nu} \abs{p_\nu - p_{n,\nu}} \norm{\tbT \bv} \,.
\]
In view of \eqref{eq:ptail}, it thus suffices to take 
\[ 
 n = \left\lceil h^{-1} \biggabs{\ln\biggl(\frac{\omin^2\eta}{\norm{\tbT \bv}}\biggr) } \right\rceil \,.
\]
This choice of $n$ is typically substantially smaller than the theoretical upper bounds in Theorem \ref{thm:apply}, where we needed to take additional measures to bound $\norm{\tilde{\bT}\bv}$ and hence started instead from an estimate of the form $\norm{(\Pc - \Pc_n) \tbT \bv} \leq \norm{(\Pc - \Pc_n)\bS^2\Restr{\supp\tilde{\bT}\bv}} \norm{\bS^{-2} \tilde{\bT}\bv}$.

For the evaluation of $\Pc_n\tbT\bv$, we additionally use a scheme analogous to the one described in \cite[Section 7.2]{BD2} to add terms incrementally with additional tensor truncations, but preserving the total accuracy tolerance. To this end, we adjust the approximate operator evaluation such that $\Pc_n \tbT =: \bw_{\eta/2}$ satisfies $\norm{\Pc\bT \bv - \bw_{\eta/2}} \leq \eta/2$, and then determine an approximation $\tilde\bw_{\eta/2}$ with $\norm{\bw_{\eta/2} - \tilde\bw_{\eta/2}} \leq \eta/2$, which is subsequently used as the output of $\apply(\bv;\eta)$. 
With $\Pc_n = \sum_{\ell=1}^{\hat m(n)} \Theta_\ell$ and $\mathbf{\tilde t} := \tbT \bv$, we first evaluate $\tau_\ell := \norm{\Theta_\ell \mathbf{\tilde t}}$ for each $\ell$, build the ascendingly sorted sequence $\hat\tau_q := \tau_{\ell(q)}$, and find $q_0$ such that $\sum_{q=1}^{q_0} \hat\tau_q \leq \eta/4$. The remaining contributions $\Theta_{\ell(q)}\mathbf{\tilde t}$ for $q=q_0+1,\ldots,\hat m(n)$ are then summed in increasing order, with an application of $\recompress(\cdot; \zeta_q)$ after adding each summand, with $\sum_{q=q_0+1}^{\hat m(n)}\zeta_q \leq \eta/4$. At this point, we deviate slightly from the treatment in \cite{BD2}, and choose $\zeta_q$ using a posteriori information: as a by-product of $\recompress(\cdot; \zeta_q)$, we obtain an estimate $\tilde\zeta_q$ of the actual truncation error, where usually $\tilde \zeta_q < \zeta_q$. To make use of this, we set $\tilde\eta_{q_0+1}:=\eta/4$, and for each $q \geq q_0+1$ take $\zeta_q := \tilde\eta_q \hat\tau_q / \sum_{p=q}^{\hat m(n)} \hat\tau_p$ and $\tilde \eta_{q+1} := \eta_q - \tilde\zeta_q$. In this manner, truncation tolerances are again assigned in dependence on the relative sizes of summands. 

\subsection{Numerical Experiments}

In our numerical tests, we first treat the same high-dimensional Poisson problem as in \cite{BD2} to allow a direct comparison to the algorithm
with convergence enforced in $\spH{1}$-norm that we considered there. Subsequently, we apply the new scheme to a problem with tridiagonal diffusion matrix $M$. As in \cite{BD2}, we use $\spL{2}$-orthonormal, continuously differentiable, piecewise polynomial Donovan-Geronimo-Hardin multiwavelets \cite{DGH:99} of polynomial degree 6 and approximation order 7, which satisfy the conditions mentioned in Remark \ref{remark:H2rbcond} and thus form a Riesz basis of $\spH{2}(0,1)\cap \spH{1}_0(0,1)$ after rescaling.

\subsubsection{High-Dimensional Poisson Problem}

Figures \ref{fig:res}, \ref{fig:ranks}, and \ref{fig:ops} show the results for the Poisson problem on $(0,1)^d$. In comparison to the results obtained in \cite{BD2},
we generally observe a similar behavior, with the expected residual reduction and with ranks increasing gradually as the accuracy increases. The computational simplifications in the new scheme are apparent in Figure \ref{fig:ops}: with similar operation counts and error bounds, we can now go up to $d=256$ instead of $d=64$. 
However, the price to pay is that all error estimates now correspond to the $\spL{2}$-norm, instead of the $\spH{1}$-norm as in \cite{BD2}.
As illustrated in Figure \ref{fig:errestcomp}, where we compare $\spL{2}$- and $\spH{1}$-errors to a reference solution computed by a highly accurate exponential sum approximation \cite{Grasedyck:04,Hackbusch:05}, we indeed no longer have control over the error in $\spH{1}$ in the present case, but do obtain an upper bound for the $\spL{2}$-error as guaranteed by our theory.
\begin{figure}[tp]
\centering
\begin{tabular}{ccc}\hspace{-.6cm}
\includegraphics[width=4.3cm]{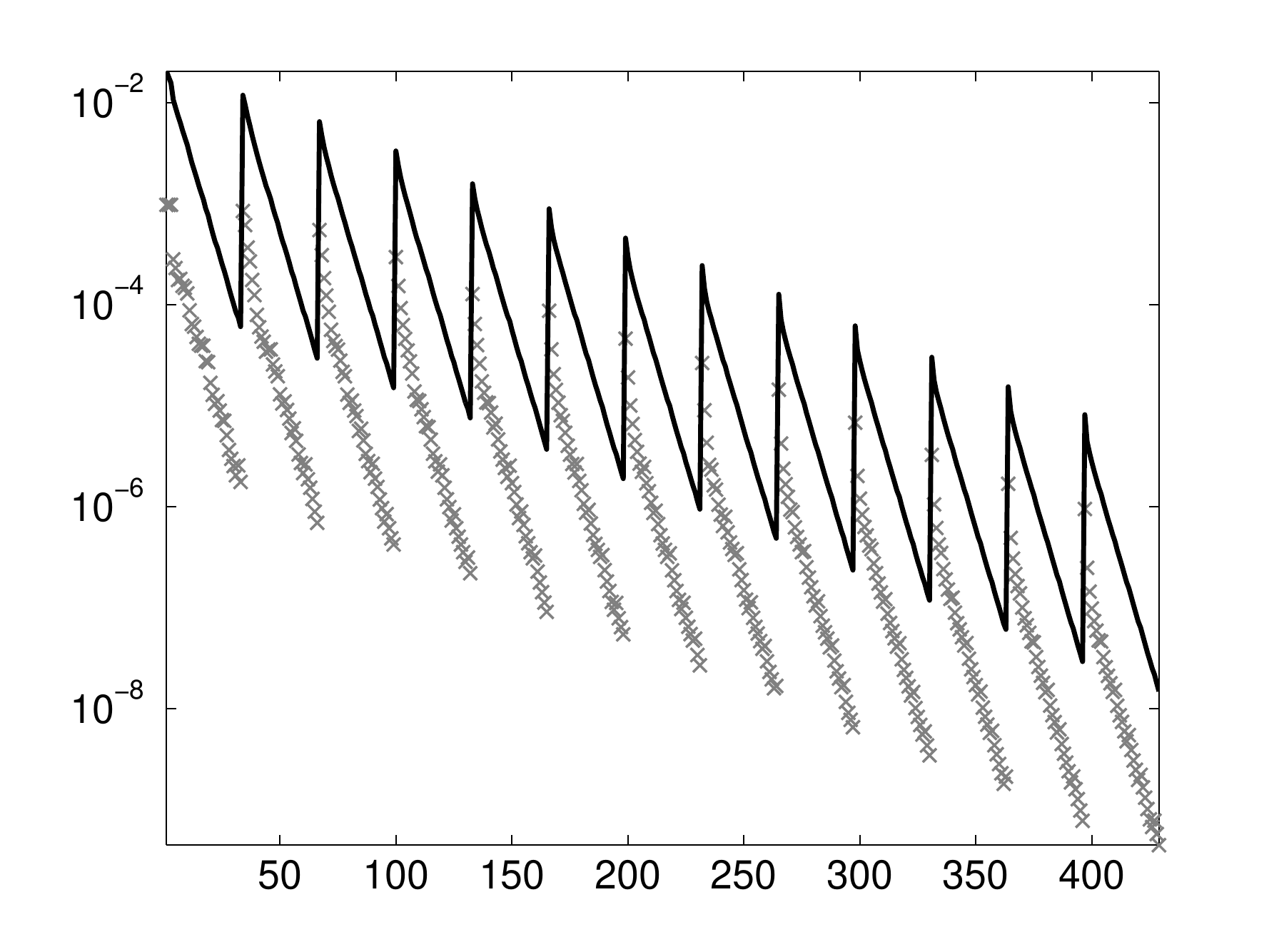} &
\includegraphics[width=4.3cm]{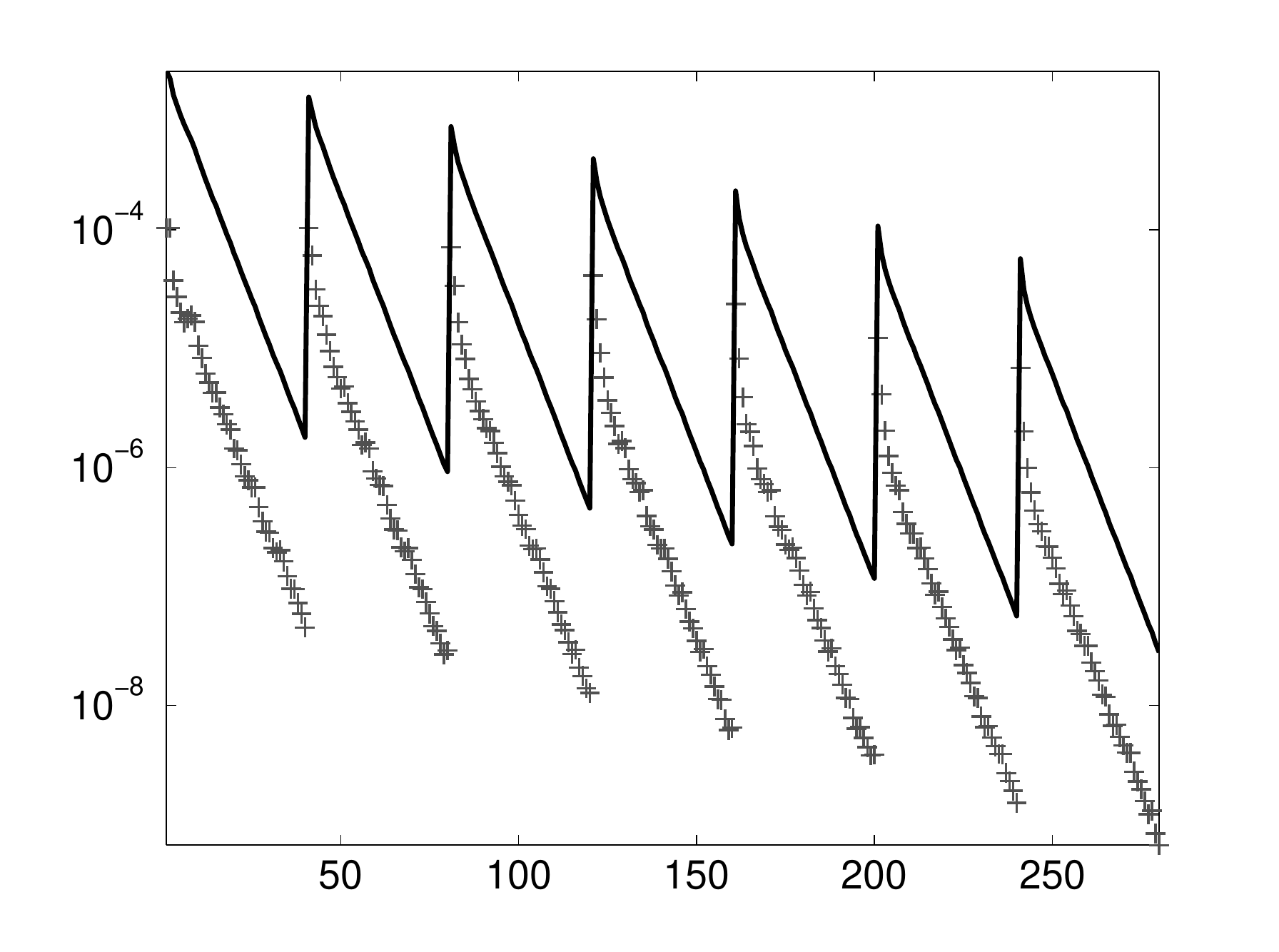} &
\includegraphics[width=4.3cm]{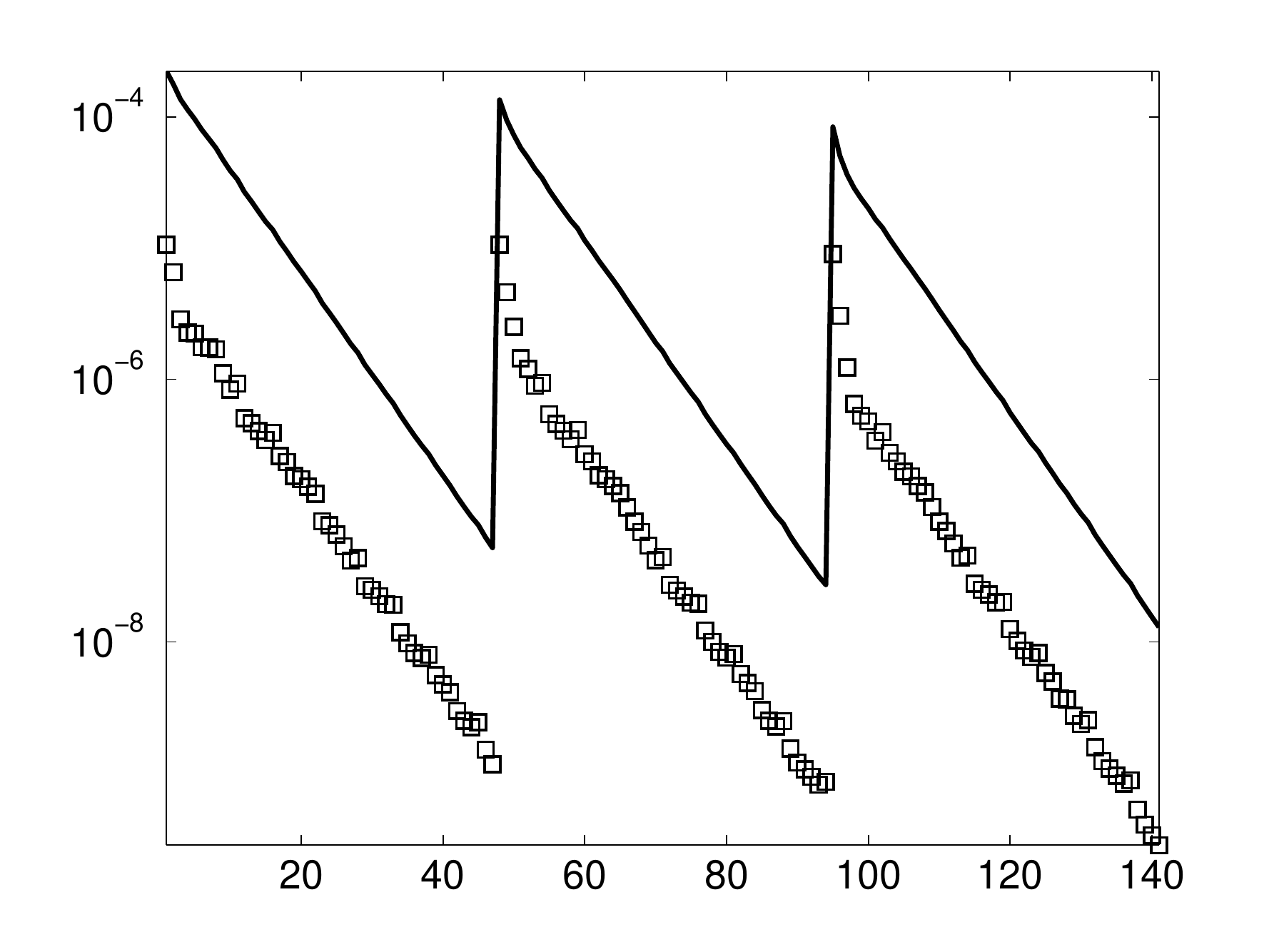}\hspace{-.5cm}
\end{tabular}
\caption{Norms of computed residual estimates (markers) and corresponding error bounds (lines),
in dependence on the total number of inner iterations (horizontal axis), for  $d=\textcolor{lightgray}{\boldsymbol{\times}} 16,\textcolor{gray}{\boldsymbol{+}} 64, \square 256$.}
\label{fig:res}
\end{figure}

\begin{figure}[tp]
\centering
\begin{tabular}{cc}\hspace{-1cm}
\includegraphics[width=6cm]{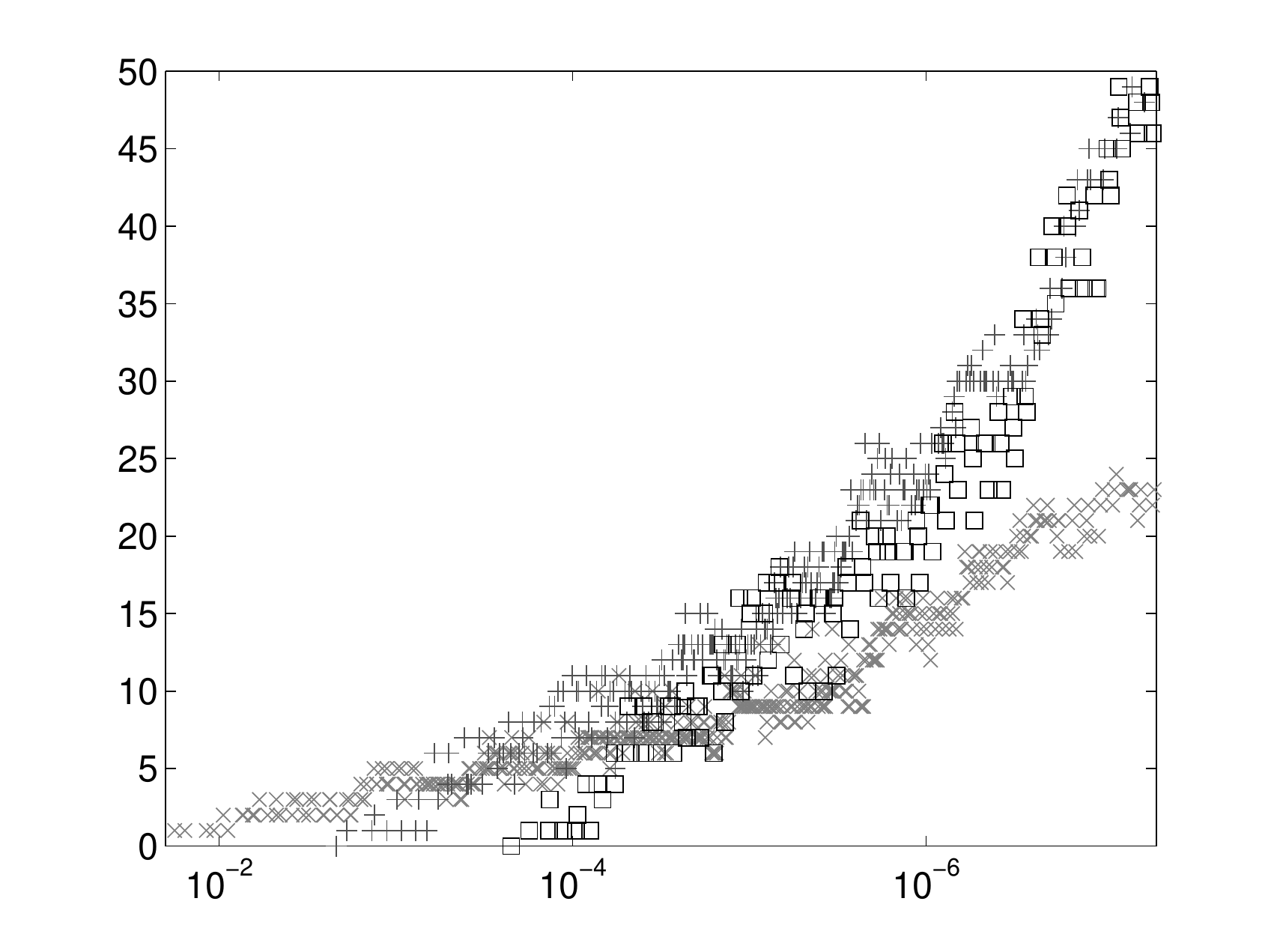} &
\includegraphics[width=6cm]{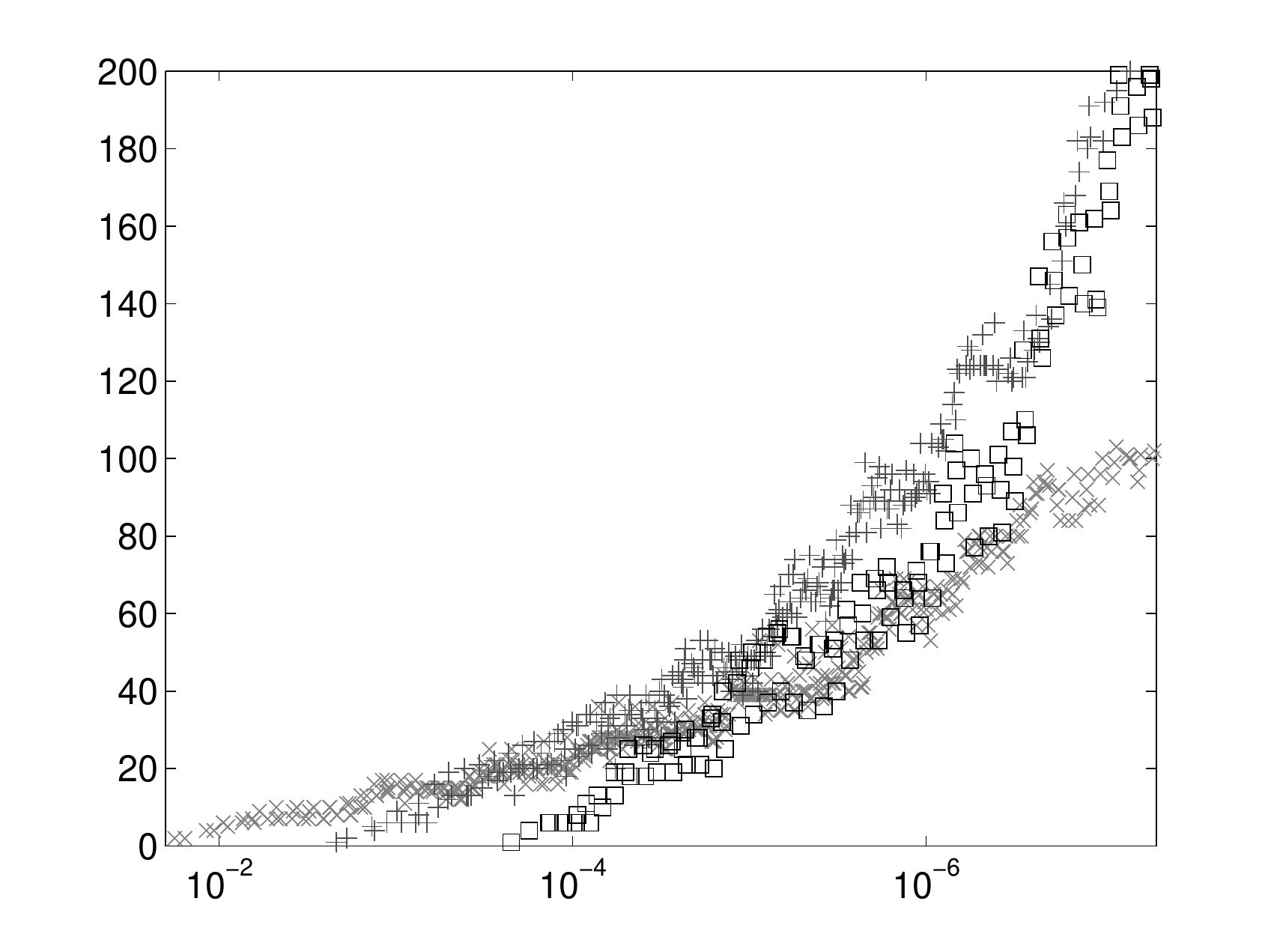}
\end{tabular}
\caption{$\abs{\rank(\bw_{k,j})}_\infty$ (left) and maximum ranks of all intermediates arising in the inner iteration steps (right),
in dependence on current estimate for $\norm{\bu - \bw_{k,j}}$ (horizontal axis), for  $d=\textcolor{lightgray}{\boldsymbol{\times}} 16,\textcolor{gray}{\boldsymbol{+}} 64, \square 256$.}
\label{fig:ranks}
\end{figure}

\begin{figure}[tp]
\centering
\includegraphics[width=8cm]{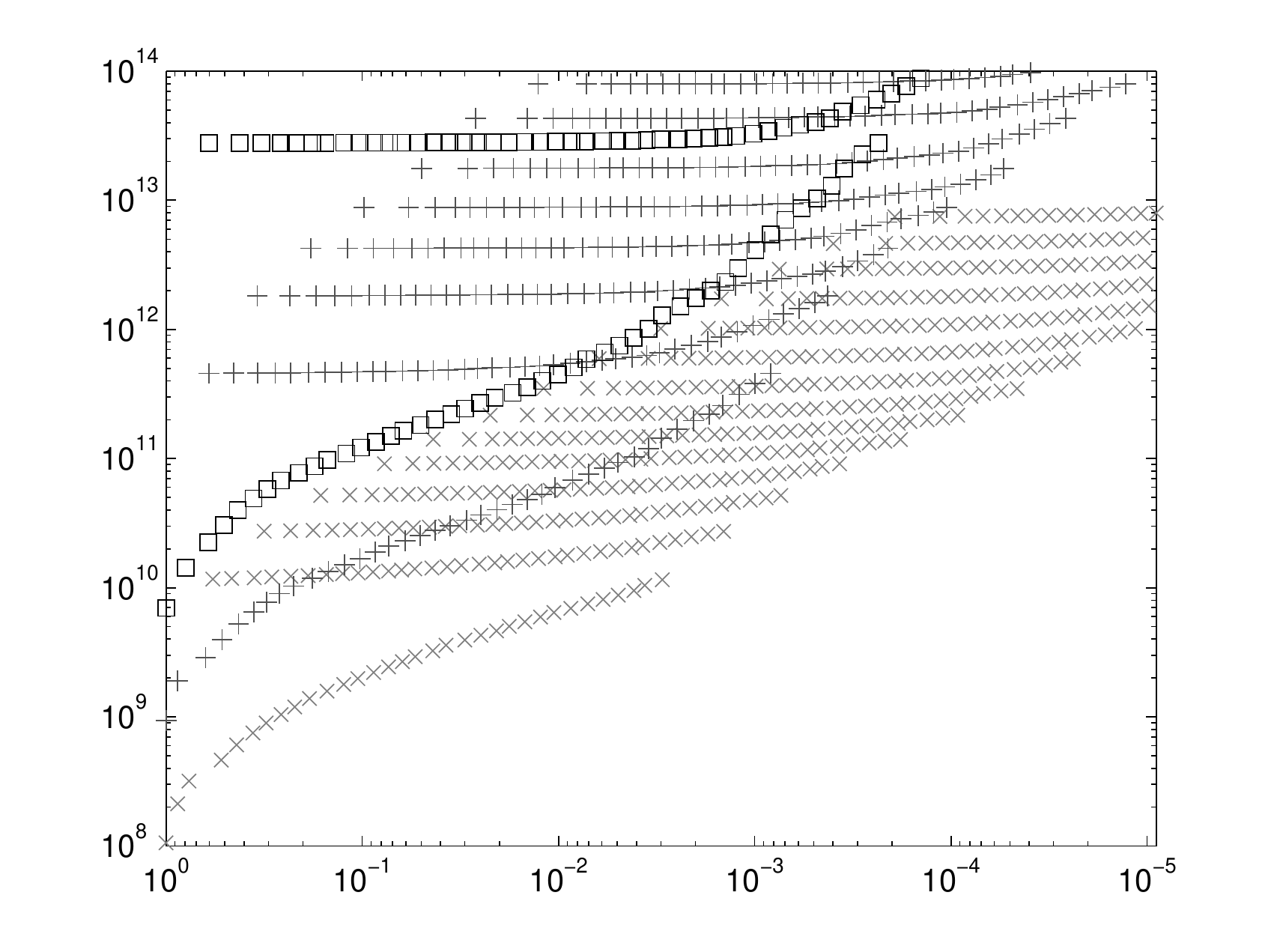}
\caption{Operation count in dependence on the error estimate reduction (horizontal axis), for $d=\textcolor{lightgray}{\boldsymbol{\times}} 16,\textcolor{gray}{\boldsymbol{+}} 64, \square 256$.}
\label{fig:ops}
\end{figure}

\begin{figure}[tp]
\centering
\includegraphics[width=8cm]{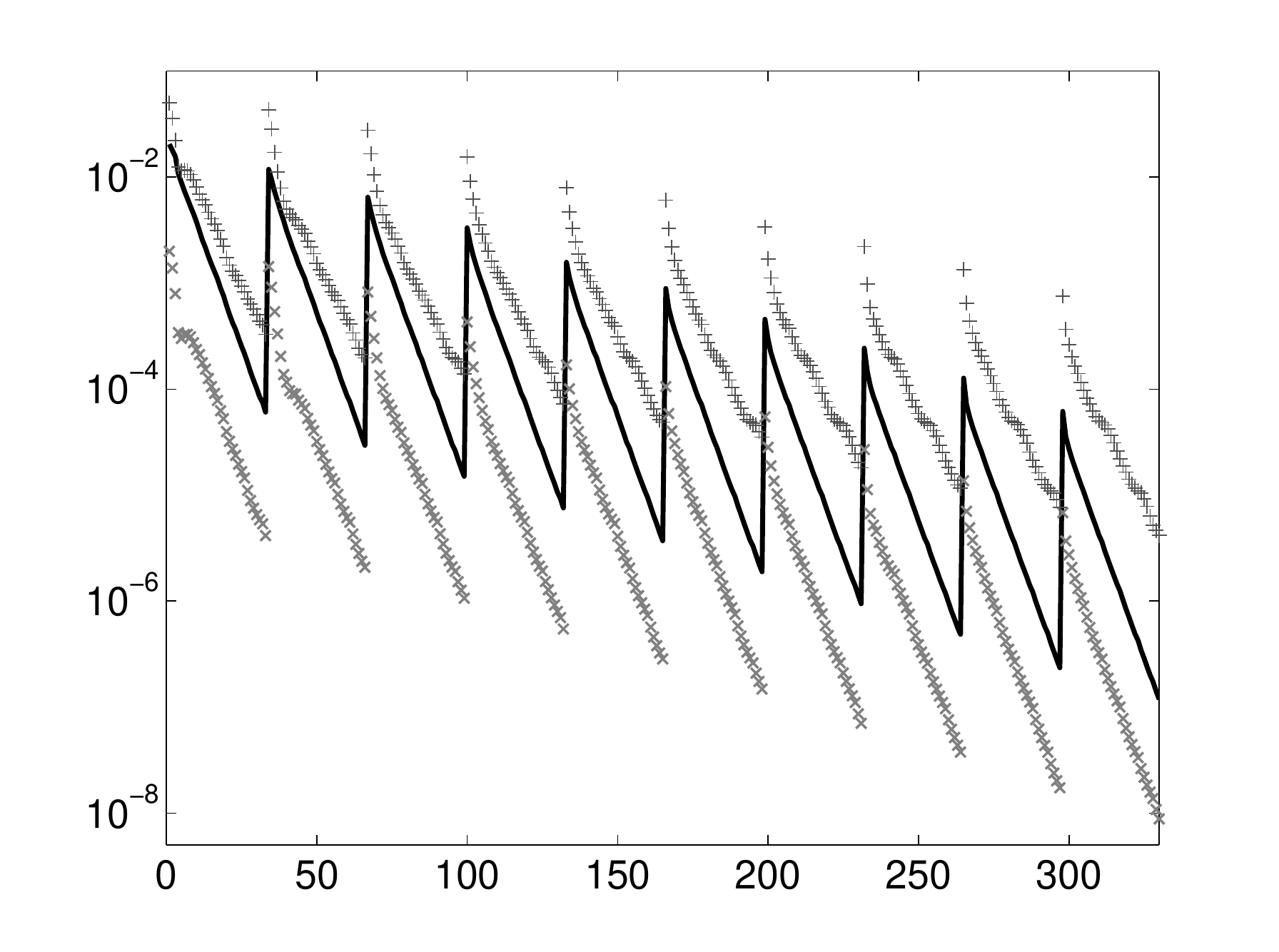}
\caption{Computed error bound (lines), differences in $\spL{2}$ ($\textcolor{lightgray}{\boldsymbol{\times}}$) and $\spH{1}$ ($\textcolor{gray}{\boldsymbol{+}}$) to reference solution,
in dependence on the total number of inner iterations (horizontal axis), for  $d= 16$.}
\label{fig:errestcomp}
\end{figure}

\subsubsection{Dirichlet Problem with Tridiagonal Diffusion Matrix}

We now consider the case of tridiagonal diffusion matrices 
\[  M=(m_{ij})_{i,j=1,\ldots,d} = \operatorname{tridiag}(-a, 2,-a)  \]
for $a=\frac12$ and $a=1$. 
As noted in \cite[Section 7.4]{BD2}, there is a significant difference in the behavior of the iteration and in the expected tensor approximability of the
solution for these two values of $a$, since for $0\leq a<1$, the condition number of $\Sc^{-1}\bT\Sc^{-1}$ (which directly affects the lower bound for $\tilde\rho$ in the present scheme) remains bounded independently of $d$, whereas it grows proportionally to $d^2$ for $a=1$. Figure \ref{fig:trir4comp} shows how this fact already manifests itself in a pronounced difference in the respective solution ranks observed for $d=4$. The $d$-dependent condition number for $a=1$ also leads to a substantial deterioration in the convergence of the iteration as $d$ increases. For $a=\frac12$, however, we are still able to treat large values of $d$, as shown in Figures \ref{fig:triranks} and \ref{fig:triops}.
\begin{figure}[tp]
\centering
\includegraphics[width=8cm]{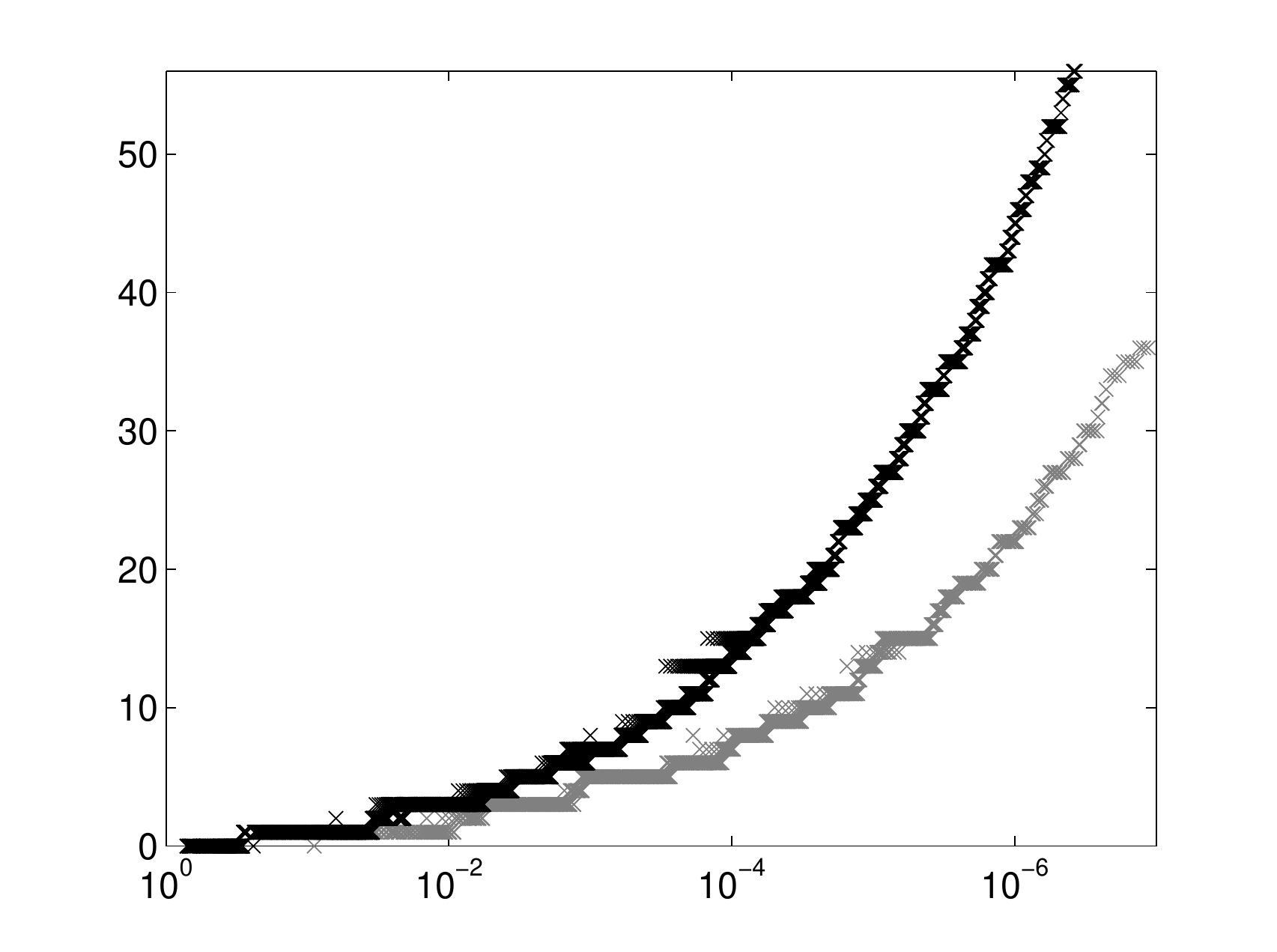}
\caption{Tridiagonal diffusion matrix, $\abs{\rank(\bw_{k,j})}_\infty$ in dependence on current estimate for $\norm{\bu - \bw_{k,j}}$ (horizontal axis), for $d=4$ and $a = \frac12$ ($\textcolor{gray}{\boldsymbol{\times}}$) and $a=1 $ ($\boldsymbol{\times}$).}
\label{fig:trir4comp}
\end{figure}

\begin{figure}[tp]
\centering
\begin{tabular}{cc}\hspace{-1cm}
\includegraphics[width=6cm]{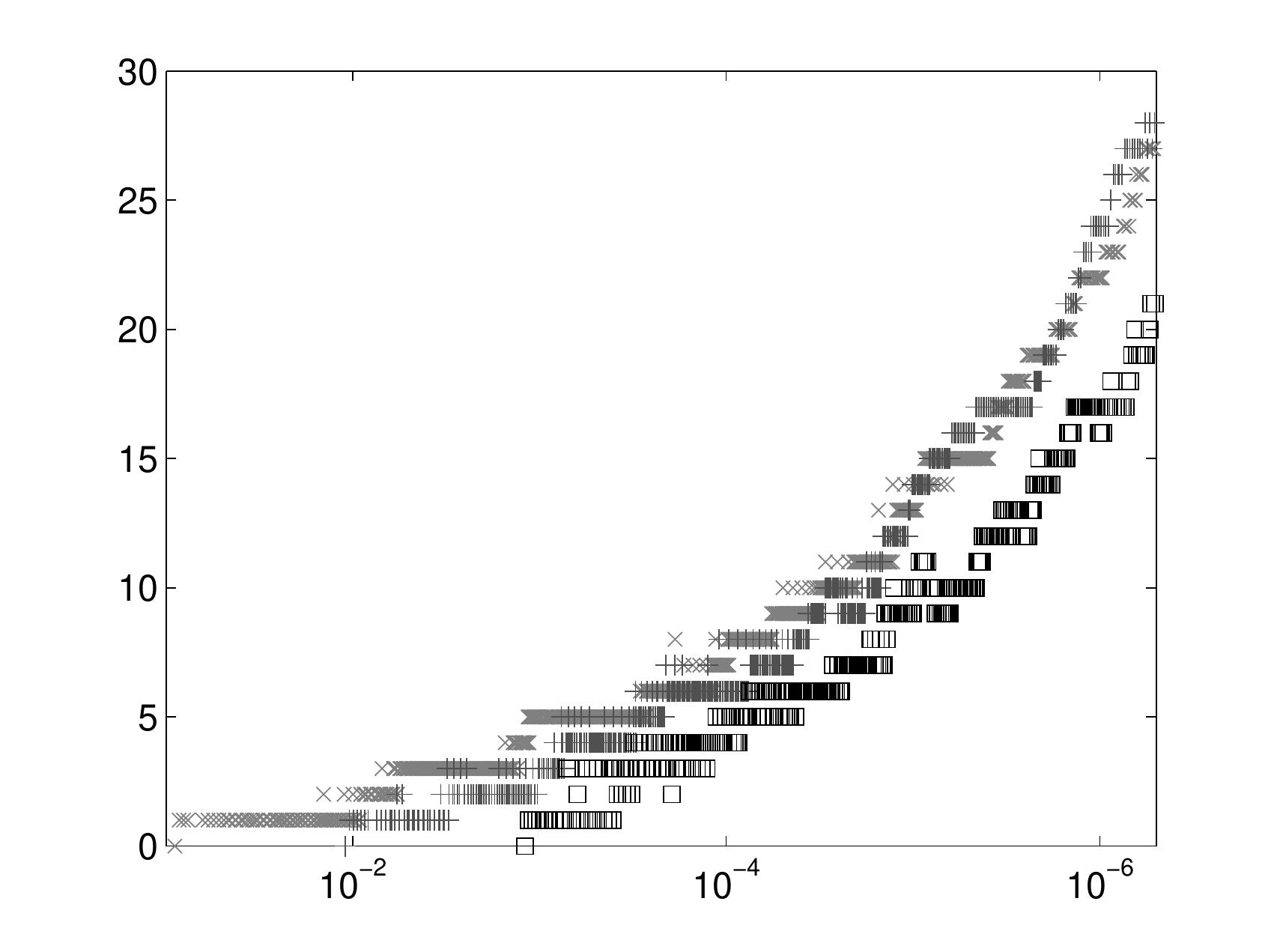} &
\includegraphics[width=6cm]{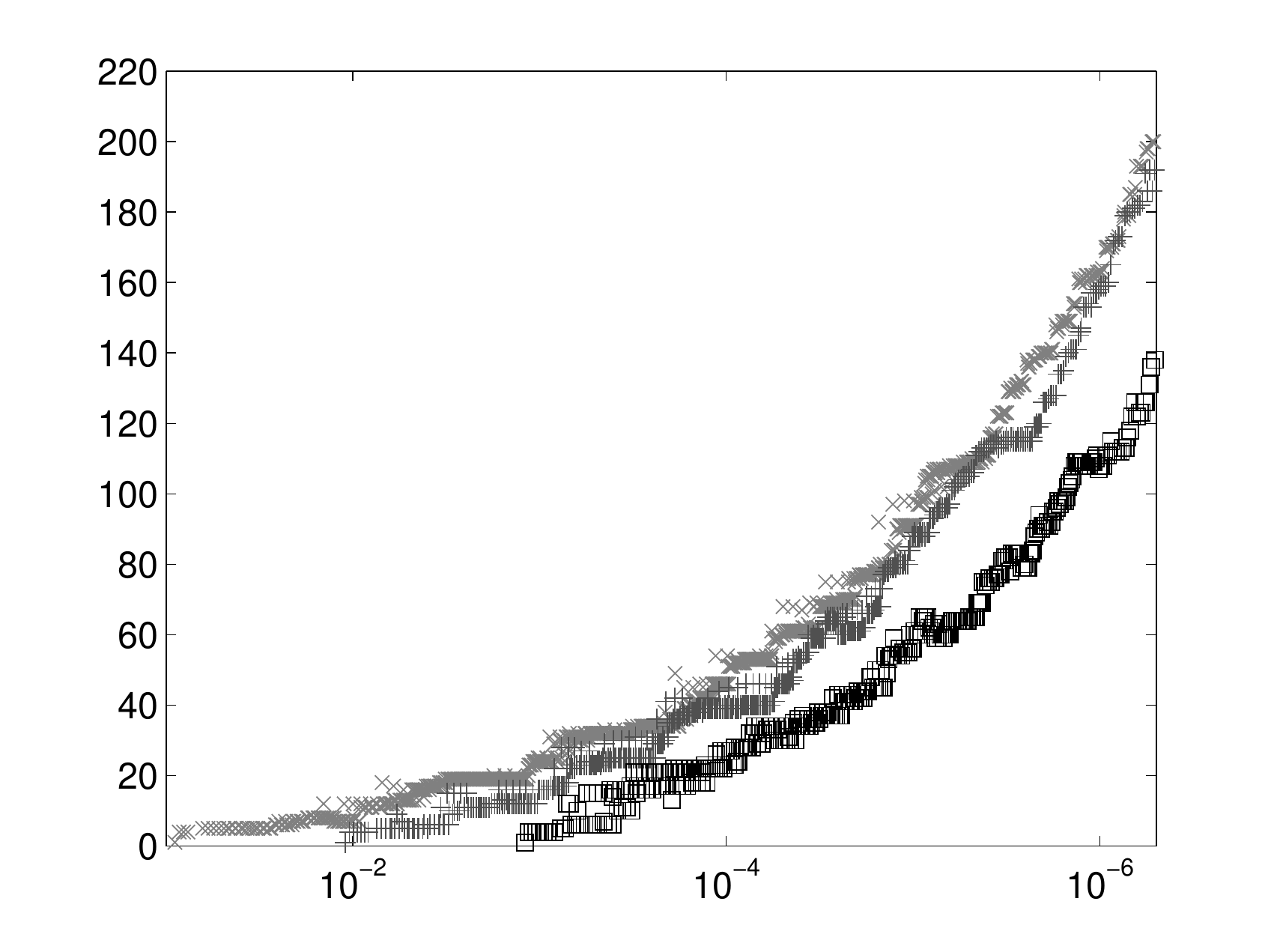}
\end{tabular}
\caption{Tridiagonal diffusion matrix, $a=\frac12$: $\abs{\rank(\bw_{k,j})}_\infty$ (left) and maximum ranks of all intermediates arising in the inner iteration steps (right),
in dependence on current estimate for $\norm{\bu - \bw_{k,j}}$ (horizontal axis), for  $d=\textcolor{lightgray}{\boldsymbol{\times}} 4,\textcolor{gray}{\boldsymbol{+}} 16, \square 64$.}
\label{fig:triranks}
\end{figure}

\begin{figure}[tp]
\centering
\includegraphics[width=8cm]{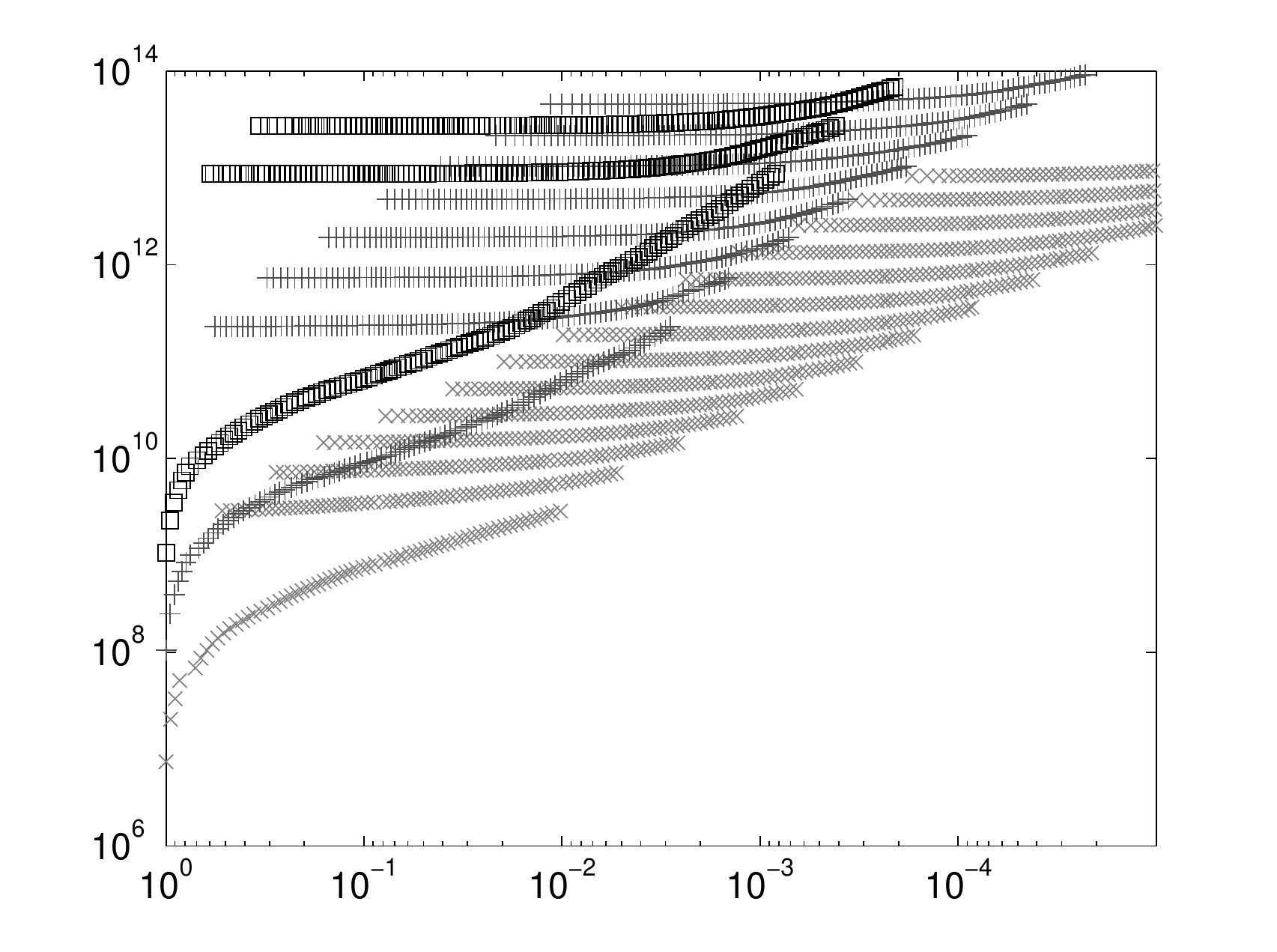}
\caption{Tridiagonal diffusion matrix, $a=\frac12$: operation count in dependence on the error estimate reduction (horizontal axis), for $d=\textcolor{lightgray}{\boldsymbol{\times}} 4,\textcolor{gray}{\boldsymbol{+}} 16, \square 64$.}
\label{fig:triops}
\end{figure}

In summary, we conclude that if convergence to the exact solution is required only in $\spL{2}$, the seeming drawback of losing symmetry in the preconditioned system is more than compensated by the practical simplifications and by the gain in computational efficiency.

\paragraph{Acknowledgements.} The authors would like to thank Kolja Brix for providing multiwavelet construction data used in the numerical experiments.

\bibliography{precondalr}

\begin{thebibliography}{10}

\bibitem{Andreev:12}
{\sc R.~Andreev and C.~Tobler}, {\em Multilevel preconditioning and low rank
  tensor iteration for space-time simultaneous discretizations of parabolic
  {PDE}s}.
\newblock SAM Report 2012-16, ETH Zürich, 2012.

\bibitem{B}
{\sc M.~Bachmayr}, {\em Adaptive Low-Rank Wavelet Methods and Applications to
  Two-Electron {S}chr{\"o}dinger Equations}, PhD thesis, RWTH Aachen, 2012.

\bibitem{BD}
{\sc M.~Bachmayr and W.~Dahmen}, {\em Adaptive near-optimal rank tensor
  approximation for high-dimensional operator equations}.
\newblock To appear in Foundations of Computational Mathematics, DOI
  10.1007/s10208-013-9187-3.

\bibitem{BD2}
\leavevmode\vrule height 2pt depth -1.6pt width 23pt, {\em Adaptive low-rank
  methods: Problems on {S}obolev spaces}.
\newblock arXiv:1407.4919 [math.NA], 2014.

\bibitem{Ballani:13}
{\sc J.~Ballani and L.~Grasedyck}, {\em A projection method to solve linear
  systems in tensor format}, Numerical Linear Algebra with Applications, 20
  (2013), pp.~27--43.

\bibitem{Beylkin-Monzon}
{\sc G.~Beylkin and L.~Monz{\'on}}, {\em Approximation by exponential sums
  revisited}, Appl.\ Comput.\ Harmon.\ Anal., 28 (2010), pp.~131--149.

\bibitem{BNZ:14}
{\sc M.~Billaud-Friess, A.~Nouy, and O.~Zahm}, {\em A tensor approximation
  method based on ideal minimal residual formulations for the solution of
  high-dimensional problems}.
\newblock To appear in ESAIM: Mathematical Modelling and Numerical Analysis,
  DOI 10.1051/m2an/2014019.

\bibitem{Cohen:01}
{\sc A.~Cohen, W.~Dahmen, and R.~DeVore}, {\em Adaptive wavelet methods for
  elliptic operator equations: {C}onvergence rates}, Mathematics of
  Computation, 70 (2001), pp.~27--75.

\bibitem{Dahmen:96}
{\sc W.~Dahmen}, {\em Stability of multiscale transformations}, Journal of
  Fourier Analysis and Applications, 2 (1996), pp.~341--361.

\bibitem{DDGS}
{\sc W.~Dahmen, R.~DeVore, L.~Grasedyck, and E.~S{\"u}li}, {\em Tensor-sparsity
  of solutions to high-dimensional elliptic partial differential equations}.
\newblock arXiv:1407.6208 [math.NA], 2014.

\bibitem{Dijkema:09}
{\sc T.~J. Dijkema, C.~Schwab, and R.~Stevenson}, {\em An adaptive wavelet
  method for solving high-dimensional elliptic {PDE}s}, Constructive
  Approximation, 30 (2009), pp.~423--455.

\bibitem{DGH:99}
{\sc G.~C. Donovan, J.~S. Geronimo, and D.~P. Hardin}, {\em Orthogonal
  polynomials and the construction of piecewise polynomial smooth wavelets},
  SIAM J. Math. Anal., 30 (1999), pp.~1029--1056.

\bibitem{Grasedyck:04}
{\sc L.~Grasedyck}, {\em Existence and computation of low {K}ronecker-rank
  approximations for large linear systems of tensor product structure},
  Computing, 72 (2004), pp.~247--265.

\bibitem{Grasedyck:10}
\leavevmode\vrule height 2pt depth -1.6pt width 23pt, {\em Hierarchical
  singular value decomposition of tensors}, SIAM J. Matrix Anal. Appl., 31
  (2010), pp.~2029--2054.

\bibitem{Hackbusch:05}
{\sc W.~Hackbusch}, {\em {E}ntwicklungen nach {E}xponentialsummen}, Tech.
  Rep.~4, MPI Leipzig, 2005.

\bibitem{Hackbusch:12}
\leavevmode\vrule height 2pt depth -1.6pt width 23pt, {\em Tensor Spaces and
  Numerical Tensor Calculus}, vol.~42 of Springer Series in Computational
  Mathematics, Springer-Verlag Berlin Heidelberg, 2012.

\bibitem{Hackbusch:06-1}
{\sc W.~Hackbusch and B.~N. Khoromskij}, {\em Low-rank {K}ronecker-product
  approximation to multi-dimensional nonlocal operators. {P}art {I}.
  {S}eparable approximation of multi-variate functions}, Computing, 76 (2006),
  pp.~177--202.

\bibitem{Hackbusch:09-1}
{\sc W.~Hackbusch and S.~K{\"u}hn}, {\em A new scheme for the tensor
  representation}, Journal of Fourier Analysis and Applications, 15 (2009),
  pp.~706--722.

\bibitem{Khoromskij:09}
{\sc B.~Khoromskij}, {\em Tensor-structured preconditioners and approximate
  inverse of elliptic operators in $\mathbb{R}^d$}, Constructive Approximation,
  30 (2009), pp.~599--620.

\bibitem{Kressner:11}
{\sc D.~Kressner and C.~Tobler}, {\em Preconditioned low-rank methods for
  high-dimensional elliptic {PDE} eigenvalue problems}, Computational Methods
  in Applied Mathematics, 11 (2011), pp.~363--381.

\bibitem{Stenger:93}
{\sc F.~Stenger}, {\em Numerical Methods Based on Sinc and Analytic Functions},
  vol.~20 of Springer Series in Computational Mathematics, Springer-Verlag,
  1993.

\end{thebibliography}

\end{document}